\newtheorem*{theorem-non}{Theorem}
\newtheorem{theorem}{Theorem}
\newtheorem{proposition}{Proposition}[section]
\newtheorem{definition}{Definition}[section]
\newtheorem{notation}{Notation}[section]
\newtheorem{corollary}{Corollary}[section]
\newtheorem{example}{Example}[section]
\newenvironment{proof}[1][Proof]{\noindent\textbf{#1.} }{\ \rule{0.5em}{0.5em}}
\numberwithin{theorem}{section}
\numberwithin{equation}{section}
\begin{document}

\title{A study on special curves of AW($k$)-type\\
in the pseudo-Galilean space}
\author{\emph{H.S.Abdel-Aziz} and \emph{M.Khalifa Saad}\thanks{
~E-mail address:~mohamed\_khalifa77@science.sohag.edu.eg} \\
{\small Math. Dept., Faculty of Science, Sohag Univ., 82524 Sohag, Egypt}}
\date{}
\maketitle

\textbf{Abstract.} This paper is devoted to the study of AW$(k)$-type $%
\left( 1\leq k\leq 3\right) $ curves according to the equiform differential
geometry of the pseudo-Galilean space $G_{3}^{1}$. We show that equiform
Bertrand curves are circular helices or isotropic circles of $G_{3}^{1}$.
Also, there are equiform Bertrand curves of AW$(3)$ and weak AW$(3)$-types.
Moreover, we give the relations between the equiform curvatures of these
curves. Finally, examples of some special curves are given and plotted.

\bigskip \textbf{\emph{M.S.C. 2010}:} 53A04, 53A35, 53C40.

\textbf{Key Words:} Frenet curves, Bertrand curves, curves of AW$(k)$-type,
equiform differential geometry, pseudo-Galilean space.

\section{Introduction}

As it is well known, geometry of space is associated with mathematical
group. The idea of invariance of geometry under transformation group may
imply that, on some spacetimes of maximum symmetry there should be a
principle of relativity which requires the invariance of physical laws
without gravity under transformations among inertial systems \cite{1}.
Besides, theory of curves and the curves of constant curvature in the
equiform differential geometry of the isotropic spaces $I_{3}^{1}$ , $%
I_{3}^{2}$ and the Galilean space $G_{3}$ are described in \cite{2} and \cite%
{3}, respectively. The pseudo-Galilean space is one of the real Cayley-Klein
spaces. It has projective signature $(0,0,+,-)$ according to \cite{2}. The
absolute of the pseudo-Galilean space is an ordered triple $\{w,f,I\}$ where
$w$ is the ideal plane, $f$ a line in $w$ and $I$ is the fixed hyperbolic
involution of the points of $f$. In \cite{4}, from the differential geometric point of
view, K. Arslan and A. West defined the notion of AW(k)-type submanifolds. Since
then, many works have been done related to AW(k)-type submanifolds (see, for
example, \cite{5,6,7,8,9,10}). In \cite{9}, \"{O}zg\"{u}r and Gezgin studied a Bertrand curve of AW$(k)$-type and furthermore, they showed that there is no such Bertrand curve of AW$(1)$ and AW$(3)$%
-types if and only if it is a right circular helix. In addition, they
studied weak AW$(2)$-type and AW$(3)$-type conical geodesic curves in
Euclidean 3-space $E^{3}$. Besides, In $3$-dimensional Galilean space and
Lorentz space, the curves of AW$(k)$-type were investigated in \cite{6,8}.
In \cite{7}, the authors gave curvature conditions and characterizations
related to AW$(k)$-type curves in $E^{n}$ and in \cite{10}, the authors
investigated curves of AW$(k)$-type in the $3$-dimensional null cone.

In this paper, to the best of author's knowledge, Bertrand curves of AW$(k)$%
-type have not been presented in the equiform geometry of the
pseudo-Galilean space $G_{3}^{1}$ in depth. Thus, the study is proposed to
serve such a need. Our paper is organized as follows. In Section $2$, the
basic notions and properties of a pseudo-Galilean geometry are reviewed. In
Section $3$, properties of the equiform geometry of the pseudo-Galilean
space $G_{3}^{1}$ are given. Section $4$ contains a study of AW$(k)$-type
equiform Frenet curves. Equiform Bertrand curves of AW$(k)$-type in $G_{3}^{1}$ included in
section $5$.

\section{Pseudo-Galilean geometric meanings}

In this section, let us first recall basic notions from pseudo-Galilean
geometry \cite{11,12}. In the inhomogeneous affine coordinates for points and vectors
(point pairs) the similarity group $H_{8}$ of $G_{3}^{1}$ has the following
form
\begin{align}
\bar{x}& =a+b.x,  \notag \\
\bar{y}& =c+d.x+r.\cosh \theta .y+r.\sinh \theta .z,  \notag \\
\bar{z}& =e+f.x+r.\sinh \theta .y+r.\cosh \theta .z,
\end{align}%
where $a,b,c,d,e,f,r$ and $\theta $ are real numbers.
Particularly, for $b=r=1,$ the group $(2.1)$ becomes the group $B_{6}\subset
H_{8}$ of isometries (proper motions) of the pseudo-Galilean space $G_{3}^{1}
$. The motion group leaves invariant the absolute figure and defines the
other invariants of this geometry. It has the following form
\begin{align}
\bar{x}& =a+x,  \notag \\
\bar{y}& =c+d.x+\cosh \theta .y+\sinh \theta .z,  \notag \\
\bar{z}& =e+f.x+\sinh \theta .y+\cosh \theta .z.
\end{align}%
According to the motion group in the pseudo-Galilean space, there are
non-isotropic vectors $A(A_{1},A_{2},A_{3})$ (for which holds $A_{1}\neq 0$)
and four types of isotropic vectors: spacelike ($A_{1}=0,$ $%
A_{2}^{2}-A_{3}^{2}>0$), timelike ($A_{1}=0,$ $A_{2}^{2}-A_{3}^{2}<0$) and
two types of lightlike vectors ($A_{1}=0,A_{2}=\pm A_{3}$). The scalar
product of two vectors $u=(u_{1},u_{2},u_{3})$ and $v=(v_{1},v_{2},v_{3})$
in $G_{3}^{1}$ is defined by
\begin{equation*}
\left\langle u,v\right\rangle =\left\{
\begin{array}{c}
u_{1}v_{1},\text{ \ \ \ \ \ \ \ \ \ \ \ \ \ \ \ \ \ \ \ \ \ \ \ \ if }%
u_{1}\neq 0\text{ or }v_{1}\neq 0, \\
u_{2}v_{2}-u_{3}v_{3}\text{ \ \ \ \ \ \ \ \ \ \ \ \ \ \ \ \ \ \ \ \ if \ }%
u_{1}=0\text{ and }v_{1}=0.%
\end{array}%
\right\}
\end{equation*}%
We introduce a pseudo-Galilean cross product in the following way
\begin{equation*}
u\times _{G_{3}^{1}}v=\left\vert
\begin{array}{ccc}
0 & -j & k \\
u_{1} & u_{2} & u_{3} \\
v_{1} & v_{2} & v_{3}%
\end{array}%
\right\vert ,
\end{equation*}%
where $j=(0,1,0)$ and $k=(0,0,1)$ are unit spacelike and timelike vectors,
respectively. Let us recall basic facts about curves in $G_{3}^{1}$, that
were introduced in \cite{15}.

A curve $\gamma (s)=(x(s),y(s),z(s))$ is called an admissible curve if it has no
inflection points $(\dot{\gamma}\times \ddot{\gamma}\neq 0)$ and no
isotropic tangents $(\dot{x}\neq 0)$ or normals whose projections on the
absolute plane would be lightlike vectors $(\dot{y}\neq \pm \dot{z})$. An
admissible curve in $G_{3}^{1}$ is an analogue of a regular curve in
Euclidean space \cite{12}.

For an admissible curve $\gamma :I\subseteq \mathbb{R}\rightarrow G_{3}^{1},$
the curvature $\kappa (s)$ and torsion $\tau (s)$ are defined by%
\begin{equation}
\kappa (s)=\frac{\sqrt{\left\vert \ddot{y}(s)^{2}-\ddot{z}(s)^{2}\right\vert
}}{(\dot{x}(s))^{2}},\text{ }\tau (s)=\frac{\ddot{y}(s)\dddot{z}(s)-\dddot{y}%
(s)\ddot{z}(s)}{\left\vert \dot{x}(s)\right\vert ^{5}\cdot \kappa ^{2}(s)},%
\text{\ }
\end{equation}%
expressed in components. Hence, for an admissible curve $\gamma :I\subseteq
\mathbb{R}\rightarrow G_{3}^{1}$ parameterized by the arc length $s$ with
differential form $ds=dx$, given by
\begin{equation}
\gamma (x)=(x,y(x),z(x)),
\end{equation}%
the formulas $(2.3)$ have the following form
\begin{equation}
\kappa (x)=\sqrt{\left\vert y^{^{\prime \prime }}(x)^{2}-z^{^{\prime \prime
}}(x)^{2}\right\vert },\text{ }\tau (x)=\frac{y^{^{\prime \prime
}}(x)z^{^{\prime \prime \prime }}(x)-y^{^{\prime \prime \prime
}}(x)z^{^{\prime \prime }}(x)}{\kappa ^{2}(x)}.
\end{equation}%
The associated trihedron is given by
\begin{align}
\mathbf{e}_{1}& =\gamma ^{\prime }(x)=(1,y^{^{\prime }}(x),z^{^{\prime
}}(x)),  \notag \\
\mathbf{e}_{2}& =\frac{1}{\kappa (x)}\gamma ^{^{\prime \prime }}(x)=\frac{1}{%
\kappa (x)}(0,y^{^{\prime \prime }}(x),z^{^{\prime \prime }}(x)),  \notag \\
\mathbf{e}_{3}& =\frac{1}{\kappa (x)}(0,\epsilon z^{^{\prime \prime
}}(x),\epsilon y^{^{\prime \prime }}(x)),
\end{align}%
where $\epsilon =+1$ or $\epsilon =-1$, chosen by criterion det$%
(e_{1},e_{2},e_{3})=1$, that means
\begin{equation*}
\left\vert y^{^{\prime \prime }}(x)^{2}-z^{^{\prime \prime
}}(x)^{2}\right\vert =\epsilon (y^{^{\prime \prime }}(x)^{2}-z^{^{\prime
\prime }}(x)^{2})\text{.}
\end{equation*}%
The curve $\gamma $ given by $(2.4)$ is timelike (resp. spacelike) if $%
\mathbf{e}_{2}(s)$ is a spacelike (resp. timelike) vector. The principal
normal vector or simply normal is spacelike if $\epsilon =+1$ and timelike
if $\epsilon =-1$. For derivatives of the tangent $\mathbf{e}_{1}$, normal $%
\mathbf{e}_{2}$ and binormal $\mathbf{e}_{3}$ vector fields, the following
Frenet formulas in $G_{3}^{1}$ hold:
\begin{align}
\mathbf{e}_{1}^{\prime }(x)& =\kappa (x)\mathbf{e}_{2}(x),  \notag \\
\mathbf{e}_{2}^{\prime }(x)& =\tau (x)\mathbf{e}_{3}(x),  \notag \\
\mathbf{e}_{3}^{\prime }(x)& =\tau (x)\mathbf{e}_{2}(x).
\end{align}

\section{Frenet formulas according to the equiform geometry of $G_{3}^{1}$}

This section contains some important facts about equiform geometry. The
equiform differential geometry of curves in the pseudo-Galilean space $%
G_{3}^{1}$ has been described in \cite{11}. In the equiform geometry a few
specific terms will be introduced. So, let $\gamma (s):I\rightarrow
G_{3}^{1} $ be an admissible curve in the pseudo-Galilean space $G_{3}^{1}$,
the equiform parameter of $\gamma $ is defined by
\begin{equation*}
\sigma :=\int \frac{1}{\rho }ds=\int \kappa ds,
\end{equation*}%
where $\rho =\frac{1}{\kappa }$ is the radius of curvature of the curve $%
\gamma $. Then, we have
\begin{equation}
\frac{ds}{d\sigma }=\rho .
\end{equation}%
Let $h$ be a homothety with the center in the origin and the coefficient $
\mu $. If we put $\bar{\gamma}=h(\gamma )$, then it follows
\begin{equation*}
\bar{s}=\mu s\text{ \ and \ }\bar{\rho}=\mu \rho ,\text{ }
\end{equation*}%
where $\bar{s}$ is the arc-length parameter of $\bar{\gamma}$ and $\bar{%
\rho}$ the radius of curvature of this curve. Therefore, $\sigma $ is an
equiform invariant parameter of $\gamma $ \cite{11}.

\begin{notation}
The functions $\kappa $ and $\tau $ are not invariants of the homothety
group, then from $(2.3)$ it follows that $\bar{\kappa}=\frac{1}{\mu }\kappa $
and $\bar{\tau}=\frac{1}{\mu }\tau $.
\end{notation}

From now on, we define the Frenet formulas of the curve $\gamma $ with
respect to its equiform invariant parameter $\sigma $ in $G_{3}^{1}.$ The
vector
\begin{equation*}
\mathbf{T}=\frac{d\gamma }{d\sigma },
\end{equation*}%
is called a tangent vector of the curve $\gamma .$ From $(2.6)$ and $(3.1)$
we get
\begin{equation}
\mathbf{T}=\frac{d\gamma }{ds}\frac{ds}{d\sigma }=\rho \cdot \frac{d\gamma }{%
ds}=\rho \cdot \mathbf{e}_{1}.
\end{equation}%
Further, we define the principal normal vector and the binormal vector by
\begin{equation}
\mathbf{N}=\rho \cdot \mathbf{e}_{2},\text{ \ }\mathbf{B}=\rho \cdot \mathbf{%
e}_{3}.
\end{equation}%
It is easy to show that $\left\{ \mathbf{T},\mathbf{N},\mathbf{B}\right\} $
is an equiform invariant frame of $\gamma .$ On the other hand, the
derivatives of these vectors with respect to $\sigma $ are given by
\begin{equation}
\left[
\begin{array}{c}
\mathbf{T} \\
\mathbf{N} \\
\mathbf{B}%
\end{array}%
\right] ^{\prime }=\left[
\begin{array}{ccc}
\dot{\rho} & 1 & 0 \\
0 & \dot{\rho} & \rho \tau \\
0 & \rho \tau & \dot{\rho}%
\end{array}%
\right] \left[
\begin{array}{c}
\mathbf{T} \\
\mathbf{N} \\
\mathbf{B}%
\end{array}%
\right] .
\end{equation}
The functions $\mathcal{K}:I\rightarrow \mathbb{R}$ defined by $\mathcal{K}=%
\dot{\rho}$ is called the equiform curvature of the curve $\gamma $ and $%
\mathcal{T}:I\rightarrow \mathbb{R}$ defined by $\mathcal{T}=\rho \tau =%
\frac{\tau }{\kappa }$ is called the equiform torsion of this curve. In the light of this, the formulas $(3.4)$ analogous to the Frenet
formulas in the equiform geometry of the pseudo-Galilean space $G_{3}^{1}$
can be written as
\begin{equation}
\left[
\begin{array}{c}
\mathbf{T} \\
\mathbf{N} \\
\mathbf{B}%
\end{array}%
\right] ^{\prime }=\left[
\begin{array}{ccc}
\mathcal{K} & 1 & 0 \\
0 & \mathcal{K} & \mathcal{T} \\
0 & \mathcal{T} & \mathcal{K}%
\end{array}%
\right] \left[
\begin{array}{c}
\mathbf{T} \\
\mathbf{N} \\
\mathbf{B}%
\end{array}%
\right] .
\end{equation}%
The equiform parameter $\sigma =\int \kappa (s)ds$ for closed curves is
called the total curvature, and it plays an important role in global
differential geometry of Euclidean space. Also, the function $\frac{\tau }{%
\kappa }$ has been already known as a conical curvature and it also has
interesting geometric interpretation.

\begin{notation}
Let $\gamma :I\rightarrow G_{3}^{1}$ be a Frenet curve in the equiform
geometry of the $G_{3}^{1}$, the following statements are true $($ see for
details \cite{11,13} $)$:
\end{notation}

\begin{enumerate}
\item If $\gamma (s)$ is an isotropic logarithmic spiral in $G_{3}^{1}$.
Then, $\mathcal{K=}$const$.\neq 0$ and $\mathcal{T}=0,$

\item If $\gamma (s)$ is a circular helix in $G_{3}^{1}$. Then, $\mathcal{K=}%
0$ and $\mathcal{T}=$const$.\neq 0,$

\item If $\gamma (s)$ is an isotropic circle in $G_{3}^{1}$. Then, $\mathcal{%
K=}0$ and $\mathcal{T}=0.$
\end{enumerate}

\section{AW($k$)-type curves in the equiform geometry of $G_{3}^{1}$}

Let $\gamma :I\rightarrow G_{3}^{1}$ be a curve in the equiform geometry of
the pseudo-Galilean space $G_{3}^{1}$. The curve $\gamma $ is called a
Frenet curve of osculating order $l$ if its derivatives $\gamma ^{\prime
}(s),\gamma ^{\prime \prime }(s),\gamma ^{\prime \prime \prime
}(s),...,\gamma ^{(l)}(s)$ are linearly dependent and $\gamma ^{\prime
}(s),\gamma ^{\prime \prime }(s),\gamma ^{\prime \prime \prime
}(s),...,\gamma ^{(l+1)}(s)$ are no longer linearly independent for all $%
s\in I$ . To each Frenet curve of order $3$ one can associate an orthonormal
$3$-frame $\left\{ \mathbf{T},\mathbf{N},\mathbf{B}\right\} $ along $\gamma $, 
such that $\gamma ^{\prime }(s)=\frac{1}{\rho }\mathbf{T}$, called the
equiform Frenet frame (Eqs. $(3.5)$).

Now, we consider equiform Frenet curevs of osculating order $3$ in $G_{3}^{1}
$ and start with some important results.

Let $\gamma :I\rightarrow G_{3}^{1}$ be a Frenet curve in the equiform
geometry of the pseudo-Galilean space. By the use of Frenet formulas $(3.5)$%
, we obtain the higher order derivatives of $\gamma $ as follows
\begin{align*}
\gamma ^{\prime }(s)& =\frac{d\gamma }{d\sigma }\frac{d\sigma }{ds}=\frac{1}{%
\rho }\mathbf{T}, \\
\gamma ^{\prime \prime }(s)& =\frac{1}{\rho ^{2}}\mathbf{N}, \\
\gamma ^{\prime \prime \prime }(s)& =\frac{1}{\rho ^{3}}\left( -\mathcal{K}%
\mathbf{N}\mathcal{+T}\mathbf{B}\right) , \\
\gamma ^{\prime \prime \prime \prime }(s)& =\frac{1}{\rho ^{4}}[(2\mathcal{K}%
^{2}\mathcal{+\mathcal{T}}^{2}-\mathcal{K}^{\prime })\mathbf{N}+(\mathcal{T}%
^{\prime }-3\mathcal{KT})\mathbf{B}].
\end{align*}

\begin{notation}
Let us write
\begin{align}
Q_{1}& =\frac{1}{\rho ^{2}}\mathbf{N}, \\
Q_{2}& =\frac{1}{\rho ^{3}}\left( -\mathcal{K}\mathbf{N}\mathcal{+T}\mathbf{B%
}\right) , \\
Q_{3}& =\frac{1}{\rho ^{4}}[(2\mathcal{K}^{2}\mathcal{+\mathcal{T}}^{2}-%
\mathcal{K}^{\prime })\mathbf{N}+(\mathcal{T}^{\prime }-3\mathcal{KT})%
\mathbf{B}].
\end{align}
\end{notation}

\begin{notation}
$\gamma ^{\prime }(s),\gamma ^{\prime \prime }(s),\gamma ^{\prime \prime
\prime }(s)$ and $\gamma ^{\prime \prime \prime \prime }(s)$ are linearly
dependent if and only if $Q_{1},Q_{2}$ and $Q_{3}$ are linearly dependent.
\end{notation}

\begin{definition}
Frenet curves (of osculating order $3$) in the equiform geometry of the
pseudo-Galilean space $G_{3}^{1}$ are called \cite{5}:

\begin{enumerate}
\item of type equiform AW$(1)$ if they satisfy \ $Q_{3}=0,$

\item of type equiform AW$(2)$ if they satisfy $\left\Vert Q_{2}\right\Vert
^{2}$ $Q_{3}=\langle Q_{3}\left( s\right) ,Q_{2} \rangle
Q_{2} ,$

\item of type equiform AW$(3)$ if they satisfy $\left\Vert Q_{1}\right\Vert
^{2}$ $Q_{3}=\langle Q_{3} ,Q_{1}\left( s\right) \rangle
Q_{1} ,$

\item of type weak equiform AW$(2)$ if they satisfy%
\begin{equation}
Q_{3}=\left\langle Q_{3},Q_{2}^{\ast }\right\rangle Q_{2}^{\ast },
\end{equation}

\item of type weak equiform AW$\left( 3\right) $ if they satisfy%
\begin{equation}
Q_{3}=\left\langle Q_{3},Q_{1}^{\ast }\right\rangle Q_{1}^{\ast },
\end{equation}%
where%
\begin{eqnarray}
Q_{1}^{\ast } &=&\frac{Q_{1}}{\left\Vert Q_{1}\right\Vert },  \notag \\
Q_{2}^{\ast } &=&\frac{Q_{2}-\left\langle Q_{2},Q_{1}^{\ast }\right\rangle
Q_{1}^{\ast }}{\left\Vert Q_{2}-\left\langle Q_{2},Q_{1}^{\ast
}\right\rangle Q_{1}^{\ast }\right\Vert }.
\end{eqnarray}
\end{enumerate}
\end{definition}

\begin{proposition}
Let $\gamma :I\rightarrow G_{3}^{1}$ be a Frenet curve $($of osculating
order $3)$ in the equiform geometry of the pseudo-Galilean space $G_{3}^{1}$,

(i) $\gamma $ is of type weak equiform AW$(2)$ if and only if
\begin{equation}
2\mathcal{K}^{2}+\mathcal{T}^{2}-\mathcal{K}^{\prime }=0,
\end{equation}

(ii) $\gamma $ is of type weak equiform AW$(2)$ if and only if
\begin{equation}
\mathcal{T}^{\prime }-3\mathcal{KT(}s\mathcal{)}=0.
\end{equation}
\end{proposition}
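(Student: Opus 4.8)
The plan is to read both conditions directly off the coordinate expressions of $Q_1,Q_2,Q_3$ in the equiform normal frame $\{\mathbf{N},\mathbf{B}\}$, once the causal characters of $\mathbf{N},\mathbf{B}$ are known and the Gram--Schmidt vectors $Q_1^{\ast},Q_2^{\ast}$ have been identified. I read part (ii) as characterizing \emph{weak equiform AW$(3)$}: the relation $\mathcal{T}^{\prime}-3\mathcal{K}\mathcal{T}=0$ is governed by the defining identity $Q_3=\langle Q_3,Q_1^{\ast}\rangle Q_1^{\ast}$, while $2\mathcal{K}^{2}+\mathcal{T}^{2}-\mathcal{K}^{\prime}=0$ in part (i) is governed by $Q_3=\langle Q_3,Q_2^{\ast}\rangle Q_2^{\ast}$; the repeated label ``AW$(2)$'' in the statement appears to be a typographical slip.

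First I would record the causal data of the equiform normals. From $\mathbf{N}=\rho\,\mathbf{e}_{2}$, $\mathbf{B}=\rho\,\mathbf{e}_{3}$ and the fact that $\mathbf{e}_{2},\mathbf{e}_{3}$ are isotropic (vanishing first coordinate), the scalar product of $G_{3}^{1}$ yields $\langle\mathbf{N},\mathbf{N}\rangle=\epsilon\rho^{2}$, $\langle\mathbf{B},\mathbf{B}\rangle=-\epsilon\rho^{2}$ and, crucially, $\langle\mathbf{N},\mathbf{B}\rangle=0$; thus $\mathbf{N}$ and $\mathbf{B}$ are orthogonal and non-null, of opposite causal character. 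Consequently $Q_1^{\ast}=Q_1/\left\Vert Q_1\right\Vert$ is a unit vector along $\mathbf{N}$, and since the metric-orthogonal complement of $\mathbf{N}$ inside $\mathrm{span}\{\mathbf{N},\mathbf{B}\}$ is exactly $\mathrm{span}\{\mathbf{B}\}$, the Gram--Schmidt vector $Q_2^{\ast}$ formed from $Q_2=\frac{1}{\rho^{3}}(-\mathcal{K}\mathbf{N}+\mathcal{T}\mathbf{B})$ is a unit vector along $\mathbf{B}$. Hence $\{Q_1^{\ast},Q_2^{\ast}\}$ is orthonormal and adapted to the splitting $\mathrm{span}\{\mathbf{N}\}\oplus\mathrm{span}\{\mathbf{B}\}$.

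With this frame both equivalences follow from
\[
Q_3=\frac{1}{\rho^{4}}\big[(2\mathcal{K}^{2}+\mathcal{T}^{2}-\mathcal{K}^{\prime})\,\mathbf{N}+(\mathcal{T}^{\prime}-3\mathcal{K}\mathcal{T})\,\mathbf{B}\big].
\]
For part (i), the weak equiform AW$(2)$ identity $Q_3=\langle Q_3,Q_2^{\ast}\rangle Q_2^{\ast}$ asserts that $Q_3$ is proportional to $Q_2^{\ast}\parallel\mathbf{B}$; as $\mathbf{N}$ and $\mathbf{B}$ are independent, this holds precisely when the $\mathbf{N}$-coefficient vanishes, i.e. $2\mathcal{K}^{2}+\mathcal{T}^{2}-\mathcal{K}^{\prime}=0$. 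For part (ii), the weak equiform AW$(3)$ identity $Q_3=\langle Q_3,Q_1^{\ast}\rangle Q_1^{\ast}$ asserts $Q_3\parallel\mathbf{N}$, which holds precisely when the $\mathbf{B}$-coefficient vanishes, i.e. $\mathcal{T}^{\prime}-3\mathcal{K}\mathcal{T}=0$. Each direction is read from the same coordinate expression, so ``if'' and ``only if'' come together.

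The step that needs genuine care, and which I expect to be the main obstacle, is the passage from ``$Q_3$ is proportional to $Q_i^{\ast}$'' to the literal identity ``$Q_3=\langle Q_3,Q_i^{\ast}\rangle Q_i^{\ast}$.'' Because the metric is indefinite, one of $Q_1^{\ast},Q_2^{\ast}$ is timelike, with $\langle Q_i^{\ast},Q_i^{\ast}\rangle=-1$, and for such a unit vector a parallel vector $v$ reconstructs as $v=-\langle v,Q_i^{\ast}\rangle Q_i^{\ast}$ rather than $v=+\langle v,Q_i^{\ast}\rangle Q_i^{\ast}$. I would therefore track the sign $\epsilon$ throughout (using $\langle\mathbf{N},\mathbf{N}\rangle=\epsilon\rho^{2}$, $\langle\mathbf{B},\mathbf{B}\rangle=-\epsilon\rho^{2}$), verifying that the coefficients in the defining identities are consistent with these signs, so that the vanishing of one coefficient of $Q_3$ is genuinely equivalent to the stated identity and not to the strictly stronger requirement $Q_3=0$.
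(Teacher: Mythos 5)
Your main-line argument coincides with the paper's: the paper's entire proof of this proposition is the sentence ``According to Definition 4.1 and Notation 4.1, the proof is obvious,'' which can only mean the coefficient-reading you carry out in the $\{\mathbf{N},\mathbf{B}\}$ frame, and your reading of part (ii) as characterizing weak equiform AW$(3)$ (the repeated ``AW$(2)$'' being a typo) matches the paper's evident intent. Your causal bookkeeping $\langle\mathbf{N},\mathbf{N}\rangle=\epsilon\rho^{2}$, $\langle\mathbf{B},\mathbf{B}\rangle=-\epsilon\rho^{2}$, $\langle\mathbf{N},\mathbf{B}\rangle=0$ is correct, and in this respect you are already more careful than the paper.

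However, the step you defer to the end --- ``I would track the sign $\epsilon$ and verify consistency'' --- is not a formality that checks out; it is the one genuine gap, and carrying it out shows the proposition \emph{fails} under the paper's literal formulas, exactly along the fault line you suspected. Two things go wrong. First, when $\epsilon=-1$ (timelike $\mathbf{N}$, so $\langle Q_{1}^{\ast},Q_{1}^{\ast}\rangle=-1$), the Euclidean Gram--Schmidt formula $(4.6)$ gives
\begin{equation*}
Q_{2}-\left\langle Q_{2},Q_{1}^{\ast}\right\rangle Q_{1}^{\ast}=\frac{1}{\rho ^{3}}\left( -2\mathcal{K}\mathbf{N}+\mathcal{T}\mathbf{B}\right) ,
\end{equation*}
which is \emph{not} along $\mathbf{B}$, contradicting your second paragraph; orthogonal complements do not rescue you here because the uncorrected projection does not project onto them. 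Second, for a timelike unit vector $Q_{i}^{\ast}$, any $Q_{3}\parallel Q_{i}^{\ast}$ satisfies $\left\langle Q_{3},Q_{i}^{\ast}\right\rangle Q_{i}^{\ast}=-Q_{3}$, so the literal identity $Q_{3}=\left\langle Q_{3},Q_{i}^{\ast}\right\rangle Q_{i}^{\ast}$ forces $Q_{3}=0$: for instance, with $\epsilon=+1$ (timelike $\mathbf{B}$) one computes $\left\langle Q_{3},Q_{2}^{\ast}\right\rangle Q_{2}^{\ast}=-\frac{1}{\rho^{4}}(\mathcal{T}^{\prime}-3\mathcal{K}\mathcal{T})\mathbf{B}$, so the weak AW$(2)$ identity is equivalent to $2\mathcal{K}^{2}+\mathcal{T}^{2}-\mathcal{K}^{\prime}=0$ \emph{together with} $\mathcal{T}^{\prime}-3\mathcal{K}\mathcal{T}=0$, not to (i) alone; symmetrically, weak AW$(3)$ degenerates to $Q_{3}=0$ when $\epsilon=-1$. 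Each stated equivalence therefore holds in exactly one causal case and fails in the other. The repair is to insert the causal signs $\epsilon_{i}=\left\langle Q_{i}^{\ast},Q_{i}^{\ast}\right\rangle=\pm 1$ into both the projection in $(4.6)$ and the defining identities, i.e.\ to take $Q_{3}=\epsilon_{i}\left\langle Q_{3},Q_{i}^{\ast}\right\rangle Q_{i}^{\ast}$; with that reading your argument closes in all cases. The paper's one-line proof silently makes this Euclidean identification, so your instinct about where the obstacle lies was exactly right --- but a complete proof must state the corrected formulas rather than hope the uncorrected ones are consistent.
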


\begin{proof}
According to Definition 4.1 and Notation 4.1, the proof is obvious.
\end{proof}

\begin{theorem}
Let $\gamma :I\rightarrow G_{3}^{1}$ be a Frenet curve $($of osculating
order $3)$ in the equiform geometry of the pseudo-Galilean space $G_{3}^{1}$%
. Then $\gamma $ is of type equiform AW$(2)$ if and only if%
\begin{equation*}
-\mathcal{K}^{\prime }+2\mathcal{K}^{2}+\mathcal{T}^{2}=0,
\end{equation*}%
\begin{equation}
\text{\ }3\mathcal{KT-T}^{\prime }=0.
\end{equation}
\end{theorem}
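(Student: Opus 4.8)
The plan is to unwind the definition of equiform AW$(2)$-type and reduce the vector identity in Definition 4.1(2) to the two scalar conditions claimed. First I would recall from Notation 4.1 that both $Q_{2}$ and $Q_{3}$ lie in the $\{\mathbf{N},\mathbf{B}\}$-plane, with
\begin{equation*}
Q_{2}=\frac{1}{\rho^{3}}\left(-\mathcal{K}\mathbf{N}+\mathcal{T}\mathbf{B}\right),\qquad Q_{3}=\frac{1}{\rho^{4}}\left[(2\mathcal{K}^{2}+\mathcal{T}^{2}-\mathcal{K}^{\prime})\mathbf{N}+(\mathcal{T}^{\prime}-3\mathcal{KT})\mathbf{B}\right].
\end{equation*}
Writing $P:=2\mathcal{K}^{2}+\mathcal{T}^{2}-\mathcal{K}^{\prime}$ and $R:=\mathcal{T}^{\prime}-3\mathcal{KT}$ for the two scalar coefficients of $Q_{3}$, the AW$(2)$ condition $\left\Vert Q_{2}\right\Vert^{2}Q_{3}=\langle Q_{3},Q_{2}\rangle Q_{2}$ is exactly the statement that $Q_{3}$ is parallel to $Q_{2}$ (it is the Cauchy--Schwarz equality/collinearity condition phrased without dividing by $\left\Vert Q_{2}\right\Vert$). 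So the whole theorem amounts to showing that collinearity of these two plane vectors forces $P=0$ and $R=0$ simultaneously.

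Next I would carry out the collinearity analysis in the $\{\mathbf{N},\mathbf{B}\}$-plane, using the pseudo-Galilean inner product in which $\mathbf{N},\mathbf{B}$ are orthonormal of opposite causal character (so $\langle\mathbf{N},\mathbf{N}\rangle=-\langle\mathbf{B},\mathbf{B}\rangle=\pm1$ and $\langle\mathbf{N},\mathbf{B}\rangle=0$). The vector equation $\left\Vert Q_{2}\right\Vert^{2}Q_{3}=\langle Q_{3},Q_{2}\rangle Q_{2}$ splits into its $\mathbf{N}$- and $\mathbf{B}$-components, giving two scalar equations. Subtracting appropriate multiples eliminates the common scalar factor $\langle Q_{3},Q_{2}\rangle/\left\Vert Q_{2}\right\Vert^{2}$ and leaves the cross-term relation
\begin{equation*}
(-\mathcal{K})\,R-\mathcal{T}\,P=0,
\end{equation*}
i.e. the determinant of the coefficient matrix of $(Q_{2},Q_{3})$ vanishes. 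The key observation is that for a genuine Frenet curve of osculating order $3$ the vector $Q_{2}$ is nonzero (its coefficients $\mathcal{K},\mathcal{T}$ do not both vanish on $I$), so collinearity of $Q_{2}$ and $Q_{3}$ with the specific structure here pins down both coefficients of $Q_{3}$ rather than merely their ratio.

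The main obstacle, and the step to treat carefully, is precisely this last inference: a single determinant condition $\mathcal{K}R+\mathcal{T}P=0$ only gives proportionality, so I must explain why it actually yields $P=0$ and $R=0$ separately. The resolution is to compare $Q_{2}$ and $Q_{3}$ as successive terms in the same Frenet expansion: since $Q_{3}$ arises from $Q_{2}$ by one further differentiation relative to the frame, the proportionality factor would have to be consistent with $\rho$ and its derivatives, and matching both components forces the null proportionality constant, hence $P=R=0$. Concretely I would write $Q_{3}=\lambda Q_{2}$ for a scalar $\lambda$, substitute into both components, and show the resulting overdetermined system is solvable only when $\lambda=0$, which is the content of the two displayed equations $-\mathcal{K}^{\prime}+2\mathcal{K}^{2}+\mathcal{T}^{2}=0$ and $3\mathcal{KT}-\mathcal{T}^{\prime}=0$. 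The converse direction is immediate: if both scalar conditions hold then $Q_{3}=0$, and the defining AW$(2)$ identity holds trivially with both sides zero.
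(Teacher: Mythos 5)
Your reduction of the AW$(2)$ identity to collinearity of $Q_{2}$ and $Q_{3}$ is sound, and you correctly isolate the dangerous step — but your proposed resolution of it fails. Writing $Q_{3}=\lambda Q_{2}$ does not force $\lambda =0$: componentwise it reads
\begin{equation*}
2\mathcal{K}^{2}+\mathcal{T}^{2}-\mathcal{K}^{\prime }=-\lambda \rho \,\mathcal{K},\qquad \mathcal{T}^{\prime }-3\mathcal{KT}=\lambda \rho \,\mathcal{T},
\end{equation*}
which is a pair of first-order ODEs for $(\mathcal{K},\mathcal{T})$ with $\lambda $ a free function — not an overdetermined system. Worse, there is a concrete counterexample inside the paper itself: for an isotropic logarithmic spiral, $\mathcal{K}\equiv c\neq 0$ and $\mathcal{T}\equiv 0$, one has $Q_{2}=-\frac{c}{\rho ^{3}}\mathbf{N}$ and $Q_{3}=\frac{2c^{2}}{\rho ^{4}}\mathbf{N}=-\frac{2c}{\rho }Q_{2}$, and for any proportional pair the identity $\left\Vert Q_{2}\right\Vert ^{2}Q_{3}=\left\langle Q_{3},Q_{2}\right\rangle Q_{2}$ holds automatically, while $-\mathcal{K}^{\prime }+2\mathcal{K}^{2}+\mathcal{T}^{2}=2c^{2}\neq 0$. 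Corollary 4.1(i) records exactly this: isotropic logarithmic spirals \emph{are} of equiform AW$(2)$-type. So the honest output of your argument is only the determinant identity $\mathcal{K}^{2}\mathcal{T}-\mathcal{KT}^{\prime }+\mathcal{TK}^{\prime }-\mathcal{T}^{3}=0$, which is precisely Theorem 4.2 (Eq. (4.10)) of the paper, not the two separate equations claimed here.

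The deeper issue is that the statement you were asked to prove is internally inconsistent with the rest of the paper, and the paper's own proof does not argue from collinearity at all: it reads the hypothesis directly as $\frac{1}{\rho ^{4}}[(2\mathcal{K}^{2}+\mathcal{T}^{2}-\mathcal{K}^{\prime })\mathbf{N}+(\mathcal{T}^{\prime }-3\mathcal{KT})\mathbf{B}]=0$, i.e. as $Q_{3}=0$ — which is the defining condition of type AW$(1)$, not AW$(2)$ — and then invokes linear independence of $\mathbf{N}$ and $\mathbf{B}$. With "AW$(2)$" replaced by "AW$(1)$" both the statement and that short proof are correct, and your converse direction (if both scalar conditions hold then $Q_{3}=0$ and the identity is trivial) is the one place your argument and the paper's genuinely agree. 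One further caution even at the collinearity stage: in the pseudo-Galilean metric $\mathbf{N}$ and $\mathbf{B}$ have opposite causal character, so $\left\Vert Q_{2}\right\Vert ^{2}=0$ can occur with $Q_{2}\neq 0$ (when $\mathcal{K}=\pm \mathcal{T}$), and then the identity $\left\Vert Q_{2}\right\Vert ^{2}Q_{3}=\left\langle Q_{3},Q_{2}\right\rangle Q_{2}$ is not equivalent to a Cauchy--Schwarz-type collinearity statement without treating the lightlike case separately — a gap your orthonormal-frame setup glosses over.
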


\begin{proof}
Since $\gamma $ is of type equiform AW$(2)$, then from $(4.3)$,
we obtain%
\begin{equation*}
\frac{1}{\rho ^{4}}[(2\mathcal{K}^{2}\mathcal{+\mathcal{T}}^{2}(s)-\mathcal{K%
}^{\prime })\mathbf{N}+(\mathcal{T}^{\prime }-3\mathcal{KT})\mathbf{B}]=0.
\end{equation*}%
As we know, the vectors $\mathbf{N}$ and $\mathbf{B}$ are linearly
independent, so we can write%
\begin{equation*}
2\mathcal{K}^{2}\mathcal{+\mathcal{T}}^{2}-\mathcal{K}^{\prime }=0\text{ and
\ }\mathcal{T}^{\prime }-3\mathcal{KT}=0.
\end{equation*}%
The converse statement is straightforward and therefore the proof is
completed.
\end{proof}

\begin{theorem}
Let $\gamma :I\rightarrow G_{3}^{1}$ be a Frenet curve $($of osculating
order $3)$ in the equiform geometry of the pseudo-Galilean space $G_{3}^{1}$%
. Then $\gamma $ is of type equiform AW$(2)$ if and only if%
\begin{equation}
\mathcal{K}^{2}\mathcal{T}-\mathcal{KT}^{\prime }+\mathcal{TK}^{\prime }-%
\mathcal{T}^{3}=0.
\end{equation}
\end{theorem}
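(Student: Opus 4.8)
The plan is to interpret the defining relation of Definition 4.1(2) as a pure proportionality statement and then collapse it to the vanishing of a single $2\times 2$ determinant. The crucial remark is that, for a Frenet curve of osculating order $3$ (so $Q_2\neq 0$) with $\|Q_2\|^2\neq 0$, the equiform AW$(2)$ relation $\|Q_2\|^2 Q_3=\langle Q_3,Q_2\rangle Q_2$ is equivalent to saying that $Q_3$ is a scalar multiple of $Q_2$. Indeed, dividing by $\|Q_2\|^2$ gives $Q_3=(\langle Q_3,Q_2\rangle/\|Q_2\|^2)Q_2$, while conversely $Q_3=\lambda Q_2$ yields $\langle Q_3,Q_2\rangle Q_2=\lambda\|Q_2\|^2 Q_2=\|Q_2\|^2 Q_3$. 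Thus AW$(2)$ is nothing but the linear dependence of $Q_2$ and $Q_3$.

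First I would record from $(4.2)$ and $(4.3)$ that both $Q_2$ and $Q_3$ lie in the plane spanned by the linearly independent vectors $\mathbf{N}$ and $\mathbf{B}$, with component vectors
\[
Q_2=\frac{1}{\rho^3}\bigl(-\mathcal{K},\,\mathcal{T}\bigr),\qquad Q_3=\frac{1}{\rho^4}\bigl(2\mathcal{K}^2+\mathcal{T}^2-\mathcal{K}',\,\mathcal{T}'-3\mathcal{KT}\bigr)
\]
relative to the basis $\{\mathbf{N},\mathbf{B}\}$. Since the prefactors $1/\rho^3$ and $1/\rho^4$ are nowhere zero, they are irrelevant to the question of linear dependence, and two vectors of a two-dimensional space are dependent exactly when the determinant of their components vanishes. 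Hence the AW$(2)$ condition is equivalent to
\[
\det\begin{pmatrix} -\mathcal{K} & \mathcal{T}\\ 2\mathcal{K}^2+\mathcal{T}^2-\mathcal{K}' & \mathcal{T}'-3\mathcal{KT}\end{pmatrix}=0.
\]

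The last step is the routine expansion of this determinant: computing $(-\mathcal{K})(\mathcal{T}'-3\mathcal{KT})-\mathcal{T}(2\mathcal{K}^2+\mathcal{T}^2-\mathcal{K}')$ and cancelling one of the two $\mathcal{K}^2\mathcal{T}$ contributions produces precisely $\mathcal{K}^2\mathcal{T}-\mathcal{K}\mathcal{T}'+\mathcal{T}\mathcal{K}'-\mathcal{T}^3$, giving the asserted identity. The only delicate point, which I regard as the main (if minor) obstacle, is making the equivalence clean in both directions: the reduction to proportionality uses division by $\|Q_2\|^2$, so I would note that a Frenet curve of order $3$ forces $Q_2\neq 0$ and that one must restrict to the non-null case $\|Q_2\|^2\neq 0$ — here a short computation gives $\|Q_2\|^2=\epsilon\rho^{-4}(\mathcal{K}^2-\mathcal{T}^2)$, so this amounts to the generic hypothesis $\mathcal{K}\neq\pm\mathcal{T}$ — in order that the forward implication, and not merely the converse, remains valid.
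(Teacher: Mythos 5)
Your proof is correct and follows essentially the same route as the paper: both reduce the equiform AW$(2)$ condition to linear dependence of $Q_{2}$ and $Q_{3}$ in the plane spanned by $\{\mathbf{N},\mathbf{B}\}$ and then expand the same $2\times 2$ coefficient determinant (the paper's $(4.11)$--$(4.12)$) to obtain $(4.10)$. Your added remark that the forward implication requires $\left\Vert Q_{2}\right\Vert ^{2}=\epsilon \rho ^{-4}(\mathcal{K}^{2}-\mathcal{T}^{2})\neq 0$ in the indefinite pseudo-Galilean scalar product is a point of care that the paper's proof silently glosses over.
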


\begin{proof}
Assuming that $\gamma $ is a Frenet curve in the equiform geometry of $%
G_{3}^{1}$ , then from $(4.2)$ and $(4.3)$, one can write%
\begin{align*}
Q_{2}& =a_{11}\mathbf{N}+a_{12}\mathbf{B}, \\
Q_{3}& =a_{21}\mathbf{N}+a_{22}\mathbf{B},
\end{align*}%
where $a_{11}$,$a_{12}$, $a_{21}$ and $a_{22}$ are differentiable functions.
Since $Q_{2}$ and $Q_{3}$ are linearly dependent, coefficients
determinant equals zero and hence
\begin{equation}
\left\vert
\begin{array}{cc}
a_{11} & a_{12} \\
a_{21} & a_{22}%
\end{array}%
\right\vert =0,
\end{equation}%
where
\begin{eqnarray}
a_{11} &=&\frac{-1}{\rho ^{3}}\mathcal{K},\text{ }a_{12}=\frac{1}{\rho ^{3}}%
\mathcal{T},  \notag \\
a_{21} &=&\frac{1}{\rho ^{4}}[-\mathcal{K}^{\prime }+2\mathcal{K}^{2}+%
\mathcal{T}^{2}],\text{ }  \notag \\
a_{22} &=&\frac{1}{\rho ^{4}}[-3\mathcal{KT+T}^{\prime }].
\end{eqnarray}%
From $(4.11)$ and $(4.12)$, we obtain $(4.10)$. It can be easily shown that
the converse assertion is also true.
\end{proof}

\begin{corollary}
Let $\gamma :I\rightarrow G_{3}^{1}$ be a Frenet curve (of osculating
order3) in the equiform geometry of the pseudo-Galilean space $G_{3}^{1}$,

(i) If $\gamma $ is an isotropic logarithmic spiral in $G_{3}^{1}$, then $%
\gamma $ is of equiform AW$(2)$-type curve.

(ii) If $\gamma $ is an equiform space or timelike general (circular) helix
in $G_{3}^{1}$, then it is not of  equiform AW$(k)$, weak AW$(2)$ and weak
AW$(3)$-types.
\end{corollary}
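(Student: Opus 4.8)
The plan is to verify the two parts of the corollary by substituting the characterizing conditions for each special curve (from Notation 3.2) into the equiform $\mathrm{AW}(k)$ criteria established earlier, and checking which identities hold.

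\textbf{Part (i).} For an isotropic logarithmic spiral we have $\mathcal{K}=\text{const}\neq 0$ and $\mathcal{T}=0$. The cleanest route is to appeal to Theorem 4.2, whose single scalar condition for equiform $\mathrm{AW}(2)$-type is
\begin{equation*}
\mathcal{K}^{2}\mathcal{T}-\mathcal{K}\mathcal{T}^{\prime }+\mathcal{T}\mathcal{K}^{\prime }-\mathcal{T}^{3}=0.
\end{equation*}
Substituting $\mathcal{T}=0$ (hence $\mathcal{T}^{\prime }=0$) makes every term vanish identically, so the identity holds and $\gamma$ is of equiform $\mathrm{AW}(2)$-type. (One could equally use the pair of conditions in Theorem 4.1: with $\mathcal{K}$ constant and $\mathcal{T}=0$ one has $\mathcal{K}^{\prime }=0$, so $-\mathcal{K}^{\prime }+2\mathcal{K}^{2}+\mathcal{T}^{2}=2\mathcal{K}^{2}\neq 0$, which shows the curve is not of the stronger types — consistent with claiming only $\mathrm{AW}(2)$.)

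\textbf{Part (ii).} For an equiform circular (general) helix the defining data from Notation 3.2 are $\mathcal{K}=0$ and $\mathcal{T}=\text{const}\neq 0$. I would run this through each criterion in turn and show each fails. First, for equiform $\mathrm{AW}(2)$: substituting $\mathcal{K}=0$, $\mathcal{T}^{\prime }=0$ into Theorem 4.2's identity leaves $-\mathcal{T}^{3}=-\mathcal{T}^{3}\neq 0$, so the curve is not $\mathrm{AW}(2)$-type. Next I examine $Q_{3}$ directly from equation $(4.3)$: with $\mathcal{K}=0$ and $\mathcal{T}$ constant, the coefficient of $\mathbf{N}$ becomes $2\mathcal{K}^{2}+\mathcal{T}^{2}-\mathcal{K}^{\prime }=\mathcal{T}^{2}\neq 0$, while the coefficient of $\mathbf{B}$, namely $\mathcal{T}^{\prime }-3\mathcal{K}\mathcal{T}$, vanishes. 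Thus $Q_{3}=\frac{1}{\rho^{4}}\mathcal{T}^{2}\mathbf{N}$ is a nonzero multiple of $\mathbf{N}$ alone.

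With $Q_{3}$ proportional to $\mathbf{N}$ in hand, the weak-type conditions are quickly dispatched. From $(4.2)$ we get $Q_{1}=\frac{1}{\rho^{2}}\mathbf{N}$ and $Q_{2}=\frac{1}{\rho^{3}}\mathcal{T}\mathbf{B}$ (since $\mathcal{K}=0$), so $Q_{1}^{\ast }$ is parallel to $\mathbf{N}$ and, because $\langle Q_{2},Q_{1}^{\ast }\rangle=0$, the Gram--Schmidt vector $Q_{2}^{\ast }$ is parallel to $\mathbf{B}$. For weak $\mathrm{AW}(3)$, equation $(4.5)$ demands $Q_{3}=\langle Q_{3},Q_{1}^{\ast }\rangle Q_{1}^{\ast }$; here $Q_{3}$ and $Q_{1}^{\ast }$ are both along $\mathbf{N}$, so this would force an identity of scalars, and I would check whether the sign/normalization produced by $\langle\mathbf{N},\mathbf{N}\rangle$ actually balances — the main subtlety being that in $G_{3}^{1}$ the inner product of $\mathbf{N}$ with itself is $\pm\rho^{2}$ depending on the causal character $\epsilon$, so one must track this factor rather than assume a Euclidean normalization. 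Likewise weak $\mathrm{AW}(2)$ via $(4.4)$ projects $Q_{3}$ onto $Q_{2}^{\ast }\parallel\mathbf{B}$, giving $\langle Q_{3},Q_{2}^{\ast }\rangle=0$ and hence the right-hand side is zero while $Q_{3}\neq 0$, a contradiction. The anticipated obstacle is precisely this bookkeeping of the indefinite scalar product: one must use $\langle u,v\rangle=u_{2}v_{2}-u_{3}v_{3}$ on the isotropic components consistently so that the norms $\Vert Q_{i}\Vert^{2}$ carry the correct sign, and confirm that under this metric the helix fails each equality rather than accidentally satisfying it through a cancellation of signs.
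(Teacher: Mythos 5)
Your Part (i) is correct and is exactly the argument the paper intends (the corollary is stated without proof, as an immediate consequence of the preceding theorems): substitute $\mathcal{K}=\mathrm{const}\neq 0$, $\mathcal{T}=0$ into the determinant criterion of Theorem 4.2, or equivalently observe that $Q_{2}$ and $Q_{3}$ are then both parallel to $\mathbf{N}$, hence linearly dependent. Your parenthetical remark about Theorem 4.1 is also apt: its proof actually imposes $Q_{3}=0$, so it characterizes AW$(1)$ rather than AW$(2)$, and your reading of it as the ``stronger'' condition is the consistent one.

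Part (ii) has a genuine gap: you take the hypothesis from Notation 3.2, namely $\mathcal{K}=0$ and $\mathcal{T}=\mathrm{const}\neq 0$, but for such a curve the AW$(3)$ criterion of Theorem 4.3, $\mathcal{T}^{\prime}-3\mathcal{K}\mathcal{T}=0$, is satisfied identically, so the curve \emph{is} of equiform AW$(3)$-type; your proposal never tests Theorem 4.3 at all, and under your reading the assertion ``not of equiform AW$(k)$'' is simply false for $k=3$. This is not an artifact: the paper itself asserts (abstract, and Corollary 5.1 combined with Theorem 5.1) that equiform Bertrand curves --- which have $\mathcal{K}=0$ and are precisely circular helices or isotropic circles --- \emph{are} of AW$(3)$ and weak AW$(3)$-types. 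The curves intended in Corollary 4.1(ii) are instead the equiform spacelike/timelike general and circular helices of Examples 1--4, which have $\mathcal{K}\neq 0$: e.g.\ $\mathcal{K}=ae^{as}$, $\mathcal{T}=-be^{as}$ in the general case, and $\mathcal{K}=\mathrm{const}\neq 0$, $\mathcal{T}=\mathrm{const}\neq 0$ in the circular case. For those one checks $\mathcal{T}^{\prime}-3\mathcal{K}\mathcal{T}=-3\mathcal{K}\mathcal{T}\neq 0$ and $2\mathcal{K}^{2}+\mathcal{T}^{2}-\mathcal{K}^{\prime}=2\mathcal{K}^{2}+\mathcal{T}^{2}\neq 0$ (circular case), so $Q_{3}\neq 0$ (not AW$(1)$, not AW$(3)$, and both weak conditions of Proposition 4.1 fail), while the determinant of Theorem 4.2 equals $\mathcal{T}(\mathcal{K}^{2}-\mathcal{T}^{2})$, nonzero generically (not AW$(2)$). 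Relatedly, your weak AW$(3)$ verification is left dangling (``I would check whether the sign/normalization balances''); had you carried it out, you would find that for $\epsilon=+1$ (spacelike normal) one gets $\langle Q_{3},Q_{1}^{\ast}\rangle Q_{1}^{\ast}=Q_{3}$ exactly, i.e.\ weak AW$(3)$ \emph{holds} for your curves --- a signal that the hypothesis you chose cannot yield the stated conclusion.
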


\begin{theorem}
Let $\gamma :I\rightarrow G_{3}^{1}$ be a Frenet curve (of osculating order $%
3$) in the equiform geometry of $G_{3}^{1}$. Then $\gamma $ is of equiform AW%
$\left( 3\right) $-type if and only if
\begin{equation}
\mathcal{T}^{\prime }-3\mathcal{KT}=0.
\end{equation}
\end{theorem}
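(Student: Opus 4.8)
The plan is to unfold the definition of equiform AW$(3)$-type directly in the frame $\{\mathbf{N},\mathbf{B}\}$ and show that the defining identity collapses to the vanishing of the $\mathbf{B}$-component of $Q_3$. The starting point is the condition from Definition 4.1(3),
\[
\|Q_1\|^2\, Q_3 = \langle Q_3, Q_1\rangle\, Q_1,
\]
which expresses that $Q_3$ equals its own projection onto $Q_1$; geometrically this forces $Q_3$ to be parallel to $Q_1$, where $Q_1 = \tfrac{1}{\rho^2}\mathbf{N}\neq 0$ for a Frenet curve of osculating order $3$.

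First I would record, from Notation 4.1, the component expressions $Q_1 = \tfrac{1}{\rho^2}\mathbf{N}$ and
\[
Q_3 = \frac{1}{\rho^4}\big[(2\mathcal{K}^2+\mathcal{T}^2-\mathcal{K}')\mathbf{N} + (\mathcal{T}'-3\mathcal{K}\mathcal{T})\mathbf{B}\big],
\]
and then pin down the metric behaviour of the equiform normal and binormal. Since $\mathbf{N}=\rho\,\mathbf{e}_2$ and $\mathbf{B}=\rho\,\mathbf{e}_3$ lie in the isotropic plane, the scalar product in play is the indefinite one, $u_2v_2-u_3v_3$. Computing with the explicit forms of $\mathbf{e}_2,\mathbf{e}_3$ from $(2.6)$ and the sign criterion for $\epsilon$, one finds $\langle\mathbf{N},\mathbf{B}\rangle=0$ while $\langle\mathbf{N},\mathbf{N}\rangle=\epsilon\rho^2$ with $\epsilon=\pm1$. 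This orthogonality is what makes the rest of the argument clean.

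Next I would substitute into the defining identity. Writing $A=2\mathcal{K}^2+\mathcal{T}^2-\mathcal{K}'$ and $B=\mathcal{T}'-3\mathcal{K}\mathcal{T}$ for the two coefficients of $Q_3$, orthogonality yields $\|Q_1\|^2 = \epsilon/\rho^2$ and $\langle Q_3, Q_1\rangle = \epsilon A/\rho^4$, so the right-hand side of the condition is a pure $\mathbf{N}$-multiple, whereas the left-hand side still carries the term $B\,\mathbf{B}$. Equating the two sides and cancelling the common factor $\epsilon/\rho^6$ leaves $B\,\mathbf{B}=0$; since $\mathbf{B}\neq0$ and $\rho,\epsilon\neq0$, this is equivalent to $B = \mathcal{T}'-3\mathcal{K}\mathcal{T}=0$. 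Because every step is an equivalence of algebraic identities, the converse direction is obtained by reading the same chain backwards.

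The only genuinely delicate point is the inner product: one must use the isotropic-plane product $u_2v_2-u_3v_3$ rather than a Euclidean one, and verify $\langle\mathbf{N},\mathbf{B}\rangle=0$ from the component formulas in $(2.6)$. Once that orthogonality is established no cross terms survive in either scalar, the $\mathbf{N}$-parts match automatically, and the equivalence with $\mathcal{T}'-3\mathcal{K}\mathcal{T}=0$ is immediate.
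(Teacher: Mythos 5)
Your proposal is correct and takes essentially the same route as the paper's (very terse) proof: unfold Definition 4.1(3) using the expressions for $Q_{1}$ and $Q_{3}$ from Notation 4.1 and compare components in the frame $\{\mathbf{N},\mathbf{B}\}$, so the condition reduces to the vanishing of the $\mathbf{B}$-coefficient $\mathcal{T}'-3\mathcal{K}\mathcal{T}$. You in fact supply details the paper leaves implicit --- notably the verification, via the isotropic-plane product $u_{2}v_{2}-u_{3}v_{3}$, that $\left\langle \mathbf{N},\mathbf{B}\right\rangle =0$ and $\left\langle \mathbf{N},\mathbf{N}\right\rangle =\epsilon \rho ^{2}$, which justifies cancelling the nonzero factor $\epsilon /\rho ^{6}$ in both directions of the equivalence.
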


\begin{proof}
Using Definition $4.1$ and Eqs. $(4.1)$ and $(4.3)$, we obtain $(4.13)$. The
converse direction is obvious, hence our Theorem is proved.
\end{proof}

\section{Bertrand curves of AW$(k)$-type}

\begin{definition}
A curve $\gamma :I\rightarrow G_{3}^{1}$ with equiform curvature $\mathcal{K}%
=0$ is called an equiform Bertrand curve if there exist a curve $\bar{\gamma}%
:I\rightarrow G_{3}^{1}$ with equiform curvature $\mathcal{\bar{K}}=0$ such
that the principal normal lines of $\gamma $ and $\bar{\gamma}$ are parallel
at the corresponding points. In this case $\bar{\gamma}$ is called an
equifrm Bertrand mate of $\gamma $ and vise versa.
\end{definition}

By Definition $5.1$, we can say that for given an equiform Bertrand pair $%
\left( \gamma ,\bar{\gamma}\right) $, there exist a functional relation $%
\bar{s}=\bar{s}(s)$ such that $\lambda (\bar{s}(s)=\lambda (s)$, then the
equiform Bertrand mate of $\gamma $ is given by

\begin{equation}
\bar{\gamma}(s)=\gamma (s)+\lambda \mathbf{N}.
\end{equation}

\begin{theorem}
If $\left( \gamma ,\bar{\gamma}\right) $ is an equiform Bertrand pair in the
equiform geometry of the pseudo-Galilean space $G_{3}^{1}$, then

\begin{description}
\item[(i)] The function $\lambda $ is constant.

\item[(ii)] $\gamma $ with non-zero constant equiform torsion is a circular
helix in $G_{3}^{1}$.

\item[(iii)] $\gamma $ with zero equiform torsion is an isotropic circle of $%
G_{3}^{1}$
\end{description}
\end{theorem}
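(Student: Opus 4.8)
The plan is to differentiate the defining relation $\bar{\gamma}=\gamma+\lambda\mathbf{N}$ and feed the result into the equiform Frenet equations $(3.5)$. Since $\gamma$ is an equiform Bertrand curve, Definition $5.1$ gives $\mathcal{K}=0$, so $(3.5)$ collapses to $\mathbf{T}'=\mathbf{N}$, $\mathbf{N}'=\mathcal{T}\mathbf{B}$ and $\mathbf{B}'=\mathcal{T}\mathbf{N}$. Differentiating $\bar{\gamma}=\gamma+\lambda\mathbf{N}$ with respect to the equiform parameter and inserting these formulas yields, up to the reparametrization $\sigma\mapsto\bar{\sigma}$, the tangent of the mate as $\bar{\mathbf{T}}=\mathbf{T}+\lambda'\mathbf{N}+\lambda\mathcal{T}\mathbf{B}$.

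To prove (i) I would use that $\bar{\gamma}$ is the Bertrand mate, so its principal normal satisfies $\bar{\mathbf{N}}\parallel\mathbf{N}$. Arguing that the tangent $\bar{\mathbf{T}}$ must then lie in the span of $\mathbf{T}$ and $\mathbf{B}$ (the complement of the common normal direction), the $\mathbf{N}$-component of $\bar{\mathbf{T}}$ is forced to vanish, which gives $\lambda'=0$; hence $\lambda$ is constant. For (ii) and (iii) I would insert $\lambda'=0$ and differentiate once more: from $\bar{\mathbf{T}}=\mathbf{T}+\lambda\mathcal{T}\mathbf{B}$ and the reduced Frenet formulas one obtains $\bar{\mathbf{T}}'=(1+\lambda\mathcal{T}^{2})\mathbf{N}+\lambda\mathcal{T}'\mathbf{B}$. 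Because $\bar{\gamma}$ also has $\bar{\mathcal{K}}=0$, its normal $\bar{\mathbf{N}}$ is proportional to $\bar{\mathbf{T}}'$ and must remain parallel to $\mathbf{N}$, so the $\mathbf{B}$-coefficient disappears, i.e. $\lambda\mathcal{T}'=0$. For a genuine mate $\lambda\neq0$, whence $\mathcal{T}'=0$ and the equiform torsion is constant.

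With $\mathcal{K}=0$ already in force and $\mathcal{T}$ now known to be constant, parts (ii) and (iii) follow directly from the classification in Notation $3.2$: if $\mathcal{T}=\text{const}\neq0$ then $\gamma$ is a circular helix in $G_{3}^{1}$, and if $\mathcal{T}=0$ then $\gamma$ is an isotropic circle of $G_{3}^{1}$. This is exactly the dichotomy recorded in the statement.

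The step I expect to be the main obstacle is the passage to $\lambda'=0$ in (i). Because the pseudo-Galilean scalar product is degenerate on the isotropic plane spanned by $\mathbf{N}$ and $\mathbf{B}$, the naive orthogonality $\langle\bar{\mathbf{T}},\mathbf{N}\rangle=0$ holds automatically and carries no information; the honest consequence of the parallel-normal condition alone is only $2\lambda'\mathcal{T}+\lambda\mathcal{T}'=0$, equivalently $\lambda^{2}\mathcal{T}=\text{const}$. To isolate $\lambda'=0$ I would work in components along $\mathbf{e}_{1},\mathbf{e}_{2},\mathbf{e}_{3}$, using that the first coordinate of $\bar{\gamma}$ coincides with that of $\gamma$ (so arc length is preserved, $\bar{s}=s$), comparing the $\mathbf{e}_{3}$-part of $\bar{\mathbf{e}}_{1}'$ against the requirement $\bar{\mathbf{e}}_{2}\parallel\mathbf{e}_{2}$, and then supplementing this with the admissibility and the unit-tangent normalization of the mate to exclude the non-constant solutions of $\lambda^{2}\mathcal{T}=\text{const}$.
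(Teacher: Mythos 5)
Your opening computation and your endgame coincide with the paper's own proof: the paper differentiates $(5.1)$, uses $(3.5)$ to write $\mathbf{\bar{T}}=\mathbf{T}+\left( \lambda \mathcal{K}+\lambda ^{\prime }\right) \mathbf{N}+\lambda \mathcal{T}\mathbf{B}$, asserts that $\mathbf{\bar{N}}\parallel \mathbf{N}$ forces the $\mathbf{N}$-coefficient to vanish (exactly the Euclidean-orthonormal step you decline to take), gets $\lambda \mathcal{K}+\lambda ^{\prime }=0$, hence $\lambda ^{\prime }=0$ since $\mathcal{K}=0$ by Definition $5.1$, and then settles (ii) and (iii) by the classification recalled in Notation $3.2$. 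Note also that in the statement the constancy of the equiform torsion is a \emph{hypothesis} of (ii) and (iii) (``with non-zero constant equiform torsion'', ``with zero equiform torsion''), so your extra derivation $\lambda \mathcal{T}^{\prime }=0$ from $\mathbf{\bar{T}}^{\prime }=\left( 1+\lambda \mathcal{T}^{2}\right) \mathbf{N}+\lambda \mathcal{T}^{\prime }\mathbf{B}$ and $\mathcal{\bar{K}}=0$, while correct once $\lambda ^{\prime }=0$ is available, proves more than the theorem requires.

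The genuine gap is in (i), and your final paragraph opens it without closing it. Your scruple is well founded: since $\mathbf{N}$ has vanishing first component while $\mathbf{\bar{T}}$ does not, $\left\langle \mathbf{\bar{T}},\mathbf{N}\right\rangle =0$ holds identically in $G_{3}^{1}$ and carries no information, and the honest content of the parallel-normal condition (obtained, as you say, from the requirement that $\mathbf{\bar{T}}^{\prime }$ be proportional to $\mathbf{\bar{N}}\parallel \mathbf{N}$ when $\mathcal{\bar{K}}=0$) is the single scalar equation $2\lambda ^{\prime }\mathcal{T}+\lambda \mathcal{T}^{\prime }=0$, i.e.\ $\lambda ^{2}\mathcal{T}=\mathrm{const}$. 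But the repair you sketch does not extract $\lambda ^{\prime }=0$ from this. Comparing the $\mathbf{e}_{3}$-part of $\mathbf{\bar{e}}_{1}^{\prime }$ with $\mathbf{\bar{e}}_{2}\parallel \mathbf{e}_{2}$ is the \emph{same} computation in the non-equiform frame: one finds $\mathbf{\bar{e}}_{1}^{\prime }=\left( \kappa +(\lambda \rho )^{\prime \prime }+\lambda \rho \tau ^{2}\right) \mathbf{e}_{2}+\left( 2(\lambda \rho )^{\prime }\tau +\lambda \rho \tau ^{\prime }\right) \mathbf{e}_{3}$, and since $\mathcal{K}=\dot{\rho}=0$ makes $\rho $ constant, the vanishing of the $\mathbf{e}_{3}$-coefficient reduces again to $2\lambda ^{\prime }\mathcal{T}+\lambda \mathcal{T}^{\prime }=0$. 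The ``unit-tangent normalization'' is vacuous, because the offset $\lambda \mathbf{N}$ is isotropic, so $\bar{x}=x$, the first component of $\mathbf{\bar{e}}_{1}$ is identically $1$, and $\bar{s}=s$ adds nothing; and admissibility is an open condition that cannot exclude the one-parameter family of nonconstant solutions $\lambda =\sqrt{c/\mathcal{T}}$ of $\lambda ^{2}\mathcal{T}=c$. The only unexploited datum that could conceivably pin $\lambda $ down is $\mathcal{\bar{K}}=0$ itself, i.e.\ constancy of $\bar{\kappa}=\left\vert \kappa +(\lambda \rho )^{\prime \prime }+\lambda \rho \tau ^{2}\right\vert $, and you never bring it to bear. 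So as written your proposal proves (ii) and (iii) conditionally on (i), but not (i); you have in effect exposed that the paper's one-line step is the unjustified one, yet flagging the hole is not the same as filling it, and your proposed patch, when executed, collapses back to $\lambda ^{2}\mathcal{T}=\mathrm{const}$.
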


\begin{proof}
Along $\gamma $ and $\bar{\gamma}$, let $\{\mathbf{T},\mathbf{N},\mathbf{B}\}
$ and $\{\mathbf{\bar{T}},\mathbf{\bar{N}},\mathbf{\bar{B}}\}$ be the Frenet
frames according to the equiform geometry of the pseudo-Galilean space $%
G_{3}^{1}$, respectively. Differentiate $(5.1)$ with respect to $s$, we
obtain
\begin{equation}
\mathbf{\bar{T}}=\mathbf{T}+\lambda \mathbf{N}^{\prime }+\lambda ^{\prime }%
\mathbf{N}.
\end{equation}%
By using $(3.5)$, we have
\begin{equation*}
\mathbf{\bar{T}}=\mathbf{T}+\left( \lambda \mathcal{K+\lambda }^{\prime
}\right) \mathbf{N}+\lambda \mathcal{T}\mathbf{B}\text{.}
\end{equation*}%
Since $\mathbf{\bar{N}\ }$is parallel to $\mathbf{N}$, we get
\begin{equation*}
\lambda \mathcal{K+\lambda }^{\prime }\mathcal{=}0,
\end{equation*}%
it follows that
\begin{equation*}
\lambda =const.
\end{equation*}

If $\gamma $ has a non-zero constant equiform torsion, then $\gamma $ is
characterized by
\begin{equation*}
\kappa =const.\neq 0,~\tau =const.\neq 0,
\end{equation*}%
and therefore $\tau /\kappa =const.$holds.

On the other hand, whenever $\mathcal{T}=0$, the natural equations of $%
\gamma $ is given by
\begin{equation*}
\kappa =const.\neq 0,~\tau =0,
\end{equation*}%
and so, the curve $\gamma $ is an isotropic circle in $G_{3}^{1}$ \cite{14}.
Thus the proof is completed.
\end{proof}

\begin{theorem}
If $\left( \gamma ,\bar{\gamma}\right) $ is a Bertrand pair in the equiform
geometry of the pseudo-Galilean space $G_{3}^{1}$, then the angle between
tangent vectors at corresponding points is constant.
\end{theorem}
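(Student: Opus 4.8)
The plan is to build directly on Theorem 5.1 and reduce everything to the two constants it produces. Recall that a Bertrand curve satisfies $\mathcal{K}=0$ by Definition 5.1, that $\lambda$ is constant by part (i), and that by parts (ii)--(iii) the curve $\gamma$ is either a circular helix or an isotropic circle; in both cases $\kappa$ and $\tau$ are constant, so the equiform torsion $\mathcal{T}=\tau/\kappa$ is constant as well. These three analytic facts, namely $\mathcal{K}=0$, $\lambda'=0$, and $\mathcal{T}'=0$, are the only input I will need.

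First I would reuse the computation already carried out in the proof of Theorem 5.1. Differentiating $(5.1)$ and applying the equiform Frenet formulas $(3.5)$ gives
\[
\mathbf{\bar{T}}\,\frac{d\bar{\sigma}}{d\sigma}=\mathbf{T}+\left(\lambda\mathcal{K}+\lambda'\right)\mathbf{N}+\lambda\mathcal{T}\,\mathbf{B}.
\]
Substituting $\mathcal{K}=0$ and $\lambda'=0$ collapses the normal component and leaves
\[
\mathbf{\bar{T}}=\tfrac{1}{c}\left(\mathbf{T}+\lambda\mathcal{T}\,\mathbf{B}\right),\qquad c=\frac{d\bar{\sigma}}{d\sigma}.
\]
Thus, in the equiform frame $\{\mathbf{T},\mathbf{N},\mathbf{B}\}$ of $\gamma$, the mate's tangent lies entirely in the $\mathbf{T}$--$\mathbf{B}$ plane, and its position there is controlled by the single coefficient $\lambda\mathcal{T}$.

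Next I would fix the pseudo-Galilean angle $\theta$ between $\mathbf{T}$ and $\mathbf{\bar{T}}$. Since $\mathbf{T}$ is non-isotropic while $\mathbf{B}$ is isotropic with the Lorentzian isotropic metric $u_{2}v_{2}-u_{3}v_{3}$, the direction of $\mathbf{\bar{T}}$ relative to $\mathbf{T}$ is registered by a hyperbolic angle, with $\cosh\theta=1/c$ and $\sinh\theta=\lambda\mathcal{T}/c$, whence $\tanh\theta=\lambda\mathcal{T}$ and $c^{2}=1-\lambda^{2}\mathcal{T}^{2}$. Because $\lambda$ and $\mathcal{T}$ are both constant, the product $\lambda\mathcal{T}$ is constant, so $\theta$ is constant and so is the reparameterization factor $c$. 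To make this a genuine proof rather than a reading-off, I would instead differentiate the defining relation $\tanh\theta=\lambda\mathcal{T}$ (equivalently, differentiate the coefficients of $\mathbf{\bar{T}}$) with respect to $\sigma$ and apply $(3.5)$ in its reduced form $\mathbf{N}'=\mathcal{T}\mathbf{B}$, $\mathbf{B}'=\mathcal{T}\mathbf{N}$ valid when $\mathcal{K}=0$; every surviving term then carries a factor of $\lambda'$, $\mathcal{K}$, or $\mathcal{T}'$, each of which vanishes, giving $d\theta/d\sigma=0$.

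The main obstacle is not the differentiation but the bookkeeping around the angle itself. Because the pseudo-Galilean scalar product of two non-isotropic vectors reduces to the product of their first components, $\langle\mathbf{T},\mathbf{\bar{T}}\rangle$ is not the invariant that detects the turning; one must instead measure $\theta$ through the isotropic ($\mathbf{B}$) component, and simultaneously verify that $c=d\bar{\sigma}/d\sigma$ is constant so that the change of parameter does not smuggle in hidden $s$-dependence. Once the normalization $c^{2}=1-\lambda^{2}\mathcal{T}^{2}$ is justified and the angle is correctly defined via the isotropic part, the constancy of $\theta$ follows immediately from the constancy of $\lambda\mathcal{T}$, completing the proof.
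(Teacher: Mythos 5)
Your argument is essentially correct, but it takes a genuinely different route from the paper's. The paper proves the theorem by differentiating the scalar product: it computes $\left\langle \mathbf{\bar{T}},\mathbf{T}\right\rangle ^{\prime }=\left\langle \mathbf{\bar{T}}^{\prime },\mathbf{T}\right\rangle +\left\langle \mathbf{\bar{T}},\mathbf{T}^{\prime }\right\rangle$ via $(3.5)$, kills $\left\langle \mathbf{\bar{N}},\mathbf{T}\right\rangle$ and $\left\langle \mathbf{\bar{T}},\mathbf{N}\right\rangle$ by the parallelism of the normals, and kills the remaining terms by $\mathcal{K}=\mathcal{\bar{K}}=0$ from Theorem $5.1$, concluding $\left\langle \mathbf{\bar{T}},\mathbf{T}\right\rangle ^{\prime }=0$. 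You instead decompose $\mathbf{\bar{T}}$ in the equiform frame of $\gamma$, obtaining $c\,\mathbf{\bar{T}}=\mathbf{T}+\lambda \mathcal{T}\mathbf{B}$ with $c=d\bar{\sigma}/d\sigma$, and argue that the frame coefficients are constant. Your criticism of the paper's invariant is well taken: under the degenerate pseudo-Galilean product, $\left\langle \mathbf{\bar{N}},\mathbf{T}\right\rangle$ and $\left\langle \mathbf{\bar{T}},\mathbf{N}\right\rangle$ vanish automatically (first components are zero times nonzero), and $\left\langle \mathbf{\bar{T}},\mathbf{T}\right\rangle$ records only the product of first components, so the paper's computation is close to vacuous; your frame-coefficient argument carries the real geometric content, and you also correctly keep the reparameterization factor $d\bar{\sigma}/d\sigma$ that the paper silently drops in $(5.2)$.

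Two caveats, both repairable within your own scheme. First, you quote Theorem $5.1$(ii)--(iii) as if it asserted that $\mathcal{T}$ is constant for every equiform Bertrand curve; as stated, those parts are conditional (they \emph{assume} $\mathcal{T}$ is a nonzero constant, resp.\ zero), so constancy of $\mathcal{T}$ is not literally available to quote. But the differentiation you sketch closes this gap: differentiating $c\,\mathbf{\bar{T}}=\mathbf{T}+\lambda \mathcal{T}\mathbf{B}$ with $\mathcal{K}=\mathcal{\bar{K}}=0$ gives
\begin{equation*}
c^{\prime }\mathbf{\bar{T}}+c^{2}\mathbf{\bar{N}}=(1+\lambda \mathcal{T}^{2})\mathbf{N}+\lambda \mathcal{T}^{\prime }\mathbf{B},
\end{equation*}
and comparing the $\mathbf{T}$- and $\mathbf{B}$-components against $\mathbf{\bar{N}}\parallel \mathbf{N}$ forces $c^{\prime }=0$ and $\lambda \mathcal{T}^{\prime }=0$, which is all you need. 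Second, the hyperbolic bookkeeping $\cosh \theta =1/c$, $\sinh \theta =\lambda \mathcal{T}/c$, $c^{2}=1-\lambda ^{2}\mathcal{T}^{2}$ is a Lorentzian normalization, not a pseudo-Galilean one: in $G_{3}^{1}$ the norm of a non-isotropic vector sees only its first component, so in fact $c=\rho /\bar{\rho}$ (constant, since $\mathcal{K}=\mathcal{\bar{K}}=0$ make both radii constant), and the angle should be read off from the constant isotropic coefficient $\lambda \mathcal{T}/c$ rather than from a $\cosh$/$\sinh$ pair. With that normalization corrected, your proof stands and is, if anything, more informative than the paper's.
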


\begin{proof}
To prove that the angle is constant, we need to show that $\left\langle
\mathbf{\bar{T}},\mathbf{T}\right\rangle ^{\prime }=0$. For this purpose
using $(3.5)$ to obtain
\begin{align}
\left\langle \mathbf{\bar{T}},\mathbf{T}\right\rangle ^{\prime }&
=\left\langle \mathbf{\bar{T}}^{\prime },\mathbf{T}\right\rangle
+\left\langle \mathbf{\bar{T}},\mathbf{T}^{\prime }\right\rangle   \notag \\
& =\left\langle \mathcal{\bar{K}}\mathbf{\bar{T}}+\mathbf{\bar{N}},\mathbf{T}%
\right\rangle +\left\langle \mathbf{\bar{T}},\mathcal{K}\mathbf{T}+\mathbf{N}%
\right\rangle   \notag \\
& =\mathcal{\tilde{K}}\left\langle \mathbf{\bar{T}},\mathbf{T}\right\rangle
+\left\langle \mathbf{\bar{N}},\mathbf{T}\right\rangle +\mathcal{K}%
\left\langle \mathbf{\bar{T}},\mathbf{T}\right\rangle   \notag \\
& \ \ \ +\left\langle \mathbf{\bar{T}},\mathbf{N}\right\rangle .\text{ \ \ \
\ \ \ \ \ \ \ \ \ \ \ \ \ \ \ \ \ \ \ \ \ \ \ \ \ \ \ \ \ \ \ \ \ \ \ \ \ \
\ \ \ \ \ \ \ \ \ \ \ \ \ \ \ \ \ \ }
\end{align}%
Because of $\mathbf{\bar{N}}$ is parallel to $\mathbf{N},$ then%
\begin{equation}
\left\langle \mathbf{\bar{N}},\mathbf{T}\right\rangle =0,\left\langle
\mathbf{\bar{T}},\mathbf{N}\right\rangle =0.
\end{equation}%
Since $\left( \gamma ,\bar{\gamma}\right) $ is a Berrand pair in the
equiform geometry of $G_{3}^{1}$, then from Theorem $5.1$, we have%
\begin{equation}
\mathcal{K=}0\text{ and }\mathcal{\bar{K}=}0.
\end{equation}%
After substituting $(5.4)$ and $(5.5)$ into $(5.3)$, we get
\begin{equation}
\left\langle \mathbf{\bar{T}},\mathbf{T}\right\rangle ^{\prime }=0.
\end{equation}%
In the light of $(5.6)$ the angle between $\mathbf{\bar{T}},\mathbf{T}$ is
constant. Thus this completes the proof.
\end{proof}

\begin{corollary}
Let $\gamma (s):I\rightarrow G_{3}^{1}$ be a Bertrand curve in the equiform
geometry of $G_{3}^{1}.$ Then

(i) $\gamma $ is a weak equiform AW$(3)$-type but not a weak equiform AW$(2)$%
-type.

(ii) $\gamma $ is equiform AW$(3)$-type but not equiform AW$(1)$ and AW$(2)$%
-types.
\end{corollary}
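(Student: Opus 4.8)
The plan is to express both parts entirely in terms of the two equiform invariants $\mathcal{K}$ and $\mathcal{T}$ and then read each AW-type condition off the characterizations of Section $4$. The starting observation is that a Bertrand curve has $\mathcal{K}=0$ by Definition $5.1$, while Theorem $5.1$ leaves exactly two possibilities: the circular helix with $\mathcal{T}=\mathrm{const}\neq 0$, or the isotropic circle with $\mathcal{T}=0$. First I would discard the isotropic circle: feeding $\mathcal{K}=\mathcal{T}=0$ into $(4.2)$ gives $Q_{2}=\frac{1}{\rho^{3}}(-\mathcal{K}\mathbf{N}+\mathcal{T}\mathbf{B})=0$, i.e. $\gamma^{\prime\prime\prime}=0$, so $\gamma$ is not of osculating order $3$ and falls outside the hypotheses of Definition $4.1$. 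Thus the corollary is really about the circular-helix branch, where $\mathcal{K}=\mathcal{K}^{\prime}=0$ and $\mathcal{T}=\mathrm{const}\neq 0$, so that $\mathcal{T}^{\prime}=0$.

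Next I would record the geometric content of the auxiliary frame in Definition $4.1$: since $Q_{1}$ is a multiple of $\mathbf{N}$, its normalization $Q_{1}^{\ast}$ points along $\mathbf{N}$, and because $\mathbf{N}$ and $\mathbf{B}$ are orthogonal the Gram--Schmidt remainder $Q_{2}^{\ast}$ then points along $\mathbf{B}$. Consequently the weak AW$(3)$ relation $(4.5)$ holds precisely when $Q_{3}$ is proportional to $\mathbf{N}$, i.e. when the $\mathbf{B}$-coefficient $\mathcal{T}^{\prime}-3\mathcal{K}\mathcal{T}$ in $(4.3)$ vanishes, while the weak AW$(2)$ relation $(4.4)$ holds precisely when $Q_{3}$ is proportional to $\mathbf{B}$, i.e. when the $\mathbf{N}$-coefficient $2\mathcal{K}^{2}+\mathcal{T}^{2}-\mathcal{K}^{\prime}$ vanishes. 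Substituting the helix values gives $\mathcal{T}^{\prime}-3\mathcal{K}\mathcal{T}=0$ and $2\mathcal{K}^{2}+\mathcal{T}^{2}-\mathcal{K}^{\prime}=\mathcal{T}^{2}\neq 0$, which (in agreement with Proposition $4.1$) is exactly part $(i)$: weak AW$(3)$ but not weak AW$(2)$.

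For part $(ii)$ I would reuse the same substitution in the three remaining characterizations. Theorem $4.4$ identifies equiform AW$(3)$ with $\mathcal{T}^{\prime}-3\mathcal{K}\mathcal{T}=0$, already verified, so $\gamma$ is AW$(3)$. The AW$(1)$ condition $Q_{3}=0$ forces both coefficients in $(4.3)$ to vanish, but the $\mathbf{N}$-coefficient is again $\mathcal{T}^{2}\neq 0$, so AW$(1)$ is impossible. Finally, the determinant criterion $(4.10)$ of Theorem $4.3$ collapses to $\mathcal{K}^{2}\mathcal{T}-\mathcal{K}\mathcal{T}^{\prime}+\mathcal{T}\mathcal{K}^{\prime}-\mathcal{T}^{3}=-\mathcal{T}^{3}\neq 0$ (equivalently, the pair in Theorem $4.2$ would require $\mathcal{T}^{2}=0$), so $\gamma$ is not AW$(2)$. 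This settles $(ii)$.

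The computations are mechanical once the invariants are fixed, so the one step I would handle with care is the opening case split. The negative assertions (``not weak AW$(2)$'' and ``not AW$(2)$'') depend on $\mathcal{T}\neq 0$, and they would genuinely fail on the isotropic circle where $\mathcal{T}=0$; the proof therefore hinges on justifying that this branch is degenerate---it drops the osculating order below $3$, killing $Q_{2}$---and hence excluded by the standing hypothesis that $\gamma$ be a Frenet curve of osculating order $3$.
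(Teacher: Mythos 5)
Your proof is correct, and in its positive part it follows exactly the route the paper implicitly intends: the paper states this corollary without any proof, the understood argument being precisely your substitution of the Bertrand data from Theorem 5.1 ($\mathcal{K}=0$, $\mathcal{T}$ constant) into the curvature characterizations of Section 4 --- the weak-type conditions of Proposition 4.1, the AW$(1)$ condition $Q_{3}=0$, the determinant criterion $(4.10)$, and the AW$(3)$ condition $(4.13)$. (Your theorem numbers are shifted relative to the source --- the determinant criterion is the paper's Theorem 4.2 and the AW$(3)$ characterization is Theorem 4.3 --- but the equations you invoke are the right ones, and you were wise to rest the ``not AW$(2)$'' claim on the determinant $(4.10)$ rather than on the paper's earlier pair of equations $(4.9)$, whose proof actually imposes $Q_{3}=0$ and so characterizes AW$(1)$, not AW$(2)$.) Where you genuinely go beyond the paper is the opening case split, and that addition is necessary rather than cosmetic: Theorem 5.1(iii) explicitly admits Bertrand curves with $\mathcal{T}=0$ (isotropic circles), and for these $Q_{3}=\frac{1}{\rho ^{4}}[(2\mathcal{K}^{2}+\mathcal{T}^{2}-\mathcal{K}^{\prime })\mathbf{N}+(\mathcal{T}^{\prime }-3\mathcal{K}\mathcal{T})\mathbf{B}]$ vanishes identically, so such a curve would be of AW$(1)$-type and of weak AW$(2)$-type, contradicting both negative assertions of the corollary. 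Your observation that $\mathcal{K}=\mathcal{T}=0$ forces $\gamma ^{\prime \prime \prime }=Q_{2}=0$, dropping the osculating order below $3$ and removing the curve from the scope of Definition 4.1, supplies exactly the justification the paper omits. Two caveats, both inherited from the paper rather than introduced by you: first, your reading of $(4.4)$--$(4.5)$ (weak AW$(2)$ iff the $\mathbf{N}$-coefficient $2\mathcal{K}^{2}+\mathcal{T}^{2}-\mathcal{K}^{\prime }$ vanishes, weak AW$(3)$ iff the $\mathbf{B}$-coefficient $\mathcal{T}^{\prime }-3\mathcal{K}\mathcal{T}$ vanishes) is the correct literal one and is what the corollary needs, but note that Proposition 4.1 labels both conditions ``weak AW$(2)$'' (a typo) and the paper's closing discussion of the last example uses the opposite labels, so the source is internally inconsistent here; second, in the indefinite pseudo-Galilean scalar product the normalized vectors $Q_{1}^{\ast },Q_{2}^{\ast }$ satisfy $\langle Q_{i}^{\ast },Q_{i}^{\ast }\rangle =\pm 1$, so the projection identities you use hold only up to causal signs --- the paper ignores this as well, and it does not affect your argument, since everything reduces to the vanishing or non-vanishing of the $\mathbf{N}$- and $\mathbf{B}$-components of $Q_{3}$.
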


\section{Examples}

We consider some examples (timelike and spacelike curves \cite{11,12}) which
characterize equiform general (circular) helices with respect to the Frenet
frame $\left\{ \mathbf{T},\mathbf{N},\mathbf{B}\right\} $ in the equiform
geometry of $G_{3}^{1}$ which satisfy some conditions of equiform curvatures
($\mathcal{K=K}(s),\mathcal{T=T}(s);~\mathcal{K=}const.\neq 0,\mathcal{T=}%
const.\neq 0;~\mathcal{K=}const.\neq 0,\mathcal{T=}0$).

\begin{example}
Consider the equiform \textbf{timelike} general helix $\mathbf{r}%
:I\longrightarrow G_{3}^{1},I\subseteq \mathbb{R}$ parameterized by the arc
length $s$ with differential form $ds=dx,$ given by
\begin{equation*}
\mathbf{r}(x)=(x,y(x),z(x)),
\end{equation*}%
where
\begin{eqnarray*}
x(s) &=&s, \\
y(s) &=&\frac{e^{-as}}{\left( a^{2}-b^{2}\right) ^{2}}\left( \left(
a^{2}+b^{2}\right) \cosh \left( bs\right) +2ab\sinh \left( bs\right) \right)
, \\
z(s) &=&\frac{e^{-as}}{\left( a^{2}-b^{2}\right) ^{2}}\left( 2ab\cosh \left(
bs\right) +\left( a^{2}+b^{2}\right) \sinh \left( bs\right) \right) ; \\
a,b&\in&\mathbb{R}-\left\{ 0\right\}.
\end{eqnarray*}%
The corresponding derivatives of $\mathbf{r}$\textbf{\ }are as follows
\begin{eqnarray*}
\mathbf{r}^{\prime } &=&\left( 1,\frac{-e^{-as}}{\left( a^{2}-b^{2}\right) }%
\left( a\cosh \left( bs\right) +b\sinh \left( bs\right) \right) ,\frac{%
e^{-as}}{\left( b^{2}-a^{2}\right) }\left( b\cosh \left( bs\right) +a\sinh
\left( bs\right) \right) \right) , \\
\mathbf{r}^{\prime \prime } &=&\left( 0,e^{-as}\cosh \left( bs\right)
,e^{-as}\sinh \left( bs\right) \right) , \\
\mathbf{r}^{\prime \prime \prime } &=&\left( 0,e^{-as}\left( -a\cosh \left(
bs\right) +b\sinh \left( bs\right) \right) ,e^{-as}\left( b\cosh \left(
bs\right) -a\sinh \left( bs\right) \right) \right) .
\end{eqnarray*}

First of all, we find that the tangent vector of $\mathbf{r}$ has the form
\begin{eqnarray*}
\mathbf{e}_{1} &=&\left( x^{\prime },y^{\prime },z^{\prime }\right) \\
&=&\left( 1,\frac{-e^{-as}}{\left( a^{2}-b^{2}\right) }\left( a\cosh \left(
bs\right) +b\sinh \left( bs\right) \right) ,\frac{e^{-as}}{\left(
b^{2}-a^{2}\right) }\left( b\cosh \left( bs\right) +a\sinh \left( bs\right)
\right) \right) .
\end{eqnarray*}

Then the two normals (normal and binormal) of the curve are, respectively%
\begin{eqnarray*}
\mathbf{e}_{2} &=&\left( 0,\cosh \left( bs\right) ,\sinh \left( bs\right)
\right) , \\
\mathbf{e}_{3} &=&\left( 0,\sinh \left( bs\right) ,\cosh \left( bs\right)
\right) ;~~\det [\mathbf{e}_{1},\mathbf{e}_{2},\mathbf{e}_{3}]=1.
\end{eqnarray*}

Thus the computations of the coordinate functions of $\mathbf{r}$ lead to%
\begin{equation*}
\kappa =e^{-as},~\tau =b~.
\end{equation*}

From the equiform Frenet formulas $(3.5)$ we can express vector fields $%
\mathbf{T},\mathbf{N},\mathbf{B}$ as follows
\begin{eqnarray*}
\mathbf{T} &=&\left( e^{as},\frac{-1}{\left( a^{2}-b^{2}\right) }\left(
a\cosh \left( bs\right) +b\sinh \left( bs\right) \right) ,\frac{1}{\left(
b^{2}-a^{2}\right) }\left( b\cosh \left( bs\right) +a\sinh \left( bs\right)
\right) \right) , \\
\mathbf{N} &=&\left( 0,e^{as}\cosh \left( bs\right) ,e^{as}\sinh \left(
bs\right) \right) , \\
\mathbf{B} &=&\left( 0,e^{as}\sinh \left( bs\right) ,e^{as}\cosh \left(
bs\right) \right) ,
\end{eqnarray*}%
respectively.
In the light of this, the equiform curvatures are given by%
\begin{equation*}
~\mathcal{K}=ae^{as},\mathcal{T}=-be^{as}.
\end{equation*}
\end{example}
\begin{center}
\begin{figure}[h]
\centering
\includegraphics[width=5cm]{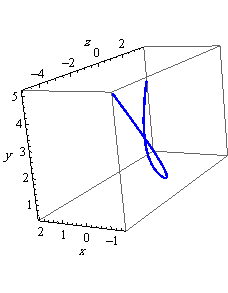} \label{fig:timelikeg2}
\caption{Equiform timelike general helix with $\mathcal{K}(s)=e^{s},\mathcal{%
T}(s)=2e^{s}$.}
\end{figure}
\end{center}

\begin{example}
Let $\mathbf{r}:I\longrightarrow G_{3}^{1},I\subseteq \mathbb{R}$ be the equiform
\textbf{spacelike} general helix, given by
\begin{equation*}
\mathbf{r}(x)=(x,y(x),z(x)),
\end{equation*}%
where
\begin{eqnarray*}
x(s) &=&s, \\
y(s) &=&\frac{e^{-as}}{\left( a^{2}-b^{2}\right) ^{2}}\left( 2ab\cosh \left(
bs\right) +\left( a^{2}+b^{2}\right) \sinh \left( bs\right) \right) , \\
z(s) &=&\frac{e^{-as}}{\left( a^{2}-b^{2}\right) ^{2}}\left( \left(
a^{2}+b^{2}\right) \cosh \left( bs\right) +2ab\sinh \left( bs\right) \right)
; \\
a,b &\in &\mathbb{R}-\left\{ 0\right\} .
\end{eqnarray*}%
For the coordinate functions of $\mathbf{r}$, we have
\begin{eqnarray*}
\mathbf{r}^{\prime } &=&\left( 1,\frac{e^{-as}}{\left( b^{2}-a^{2}\right) }%
\left( b\cosh \left( bs\right) +a\sinh \left( bs\right) \right) ,\frac{%
-e^{-as}}{\left( a^{2}-b^{2}\right) }\left( a\cosh \left( bs\right) +b\sinh
\left( bs\right) \right) \right) , \\
\mathbf{r}^{\prime \prime } &=&\left( 0,e^{-as}\sinh \left( bs\right)
,e^{-as}\cosh \left( bs\right) \right) , \\
\mathbf{r}^{\prime \prime \prime } &=&\left( 0,e^{-as}\left( b\cosh \left(
bs\right) -a\sinh \left( bs\right) \right) ,e^{-as}\left( b\sinh \left(
bs\right) -a\cosh \left( bs\right) \right) \right) .
\end{eqnarray*}%
Also, the associated trihedron is given by
\begin{eqnarray*}
\mathbf{e}_{1} &=&\left( 1,\frac{e^{-as}}{\left( b^{2}-a^{2}\right) }\left(
b\cosh \left( bs\right) +a\sinh \left( bs\right) \right) ,\frac{-e^{-as}}{%
\left( a^{2}-b^{2}\right) }\left( a\cosh \left( bs\right) +b\sinh \left(
bs\right) \right) \right) , \\
\mathbf{e}_{2} &=&\left( 0,\sinh \left( bs\right) ,\cosh \left( bs\right)
\right) , \\
\mathbf{e}_{3} &=&\left( 0,-\cosh \left( bs\right) ,-\sinh \left( bs\right)
\right) .~
\end{eqnarray*}

The curvature and torsion of this curve are
\begin{equation*}
\kappa =e^{-as},~\tau =-b~.~
\end{equation*}

Furthermore, the tangent, normal and binormal vector fields in the equiform
geometry of $G_{3}^{1}$ are obtained as follows
\begin{eqnarray*}
\mathbf{T} &=&\left( e^{as},\frac{1}{\left( b^{2}-a^{2}\right) }\left(
b\cosh \left( bs\right) +a\sinh \left( bs\right) \right) ,\frac{-1}{\left(
a^{2}-b^{2}\right) }\left( a\cosh \left( bs\right) +b\sinh \left( bs\right)
\right) \right) , \\
\mathbf{N} &=&\left( 0,e^{as}\sinh \left( bs\right) ,e^{as}\cosh \left(
bs\right) \right) , \\
\mathbf{B} &=&\left( 0,-e^{as}\cosh \left( bs\right) ,-e^{as}\sinh \left(
bs\right) \right) ,
\end{eqnarray*}%
respectively.

The equiform curvatures of $\mathbf{r}$ are
\begin{equation*}
~\mathcal{K}=ae^{as},\mathcal{T}=-be^{as}.
\end{equation*}
\end{example}

\begin{center}
\begin{figure}[h]
\centering
\includegraphics[width=6.5cm]{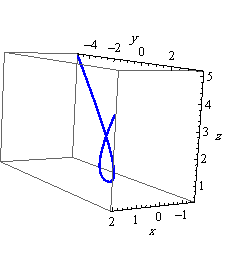} \label{fig:spacelikeg2}
\caption{Equiform spacelike general helix with $\mathcal{K}(s)=e^{s},%
\mathcal{T}(s)=-2e^{s}$.}
\end{figure}
\end{center}

\begin{example}
In this example, let us consider the equiform timelike\textbf{\ circular}
helix $\mathbf{r}:I\longrightarrow G_{3}^{1}$ given by
\begin{equation*}
\mathbf{r}(x)=(x,y(x),z(x)),
\end{equation*}%
where
\begin{eqnarray*}
x(s) &=&s, \\
y(s) &=&\frac{a^{3}s}{b\left( b^{2}-a^{2}\right) }\left( b\sinh \left( \frac{%
b}{a}\ln (as)\right) -a\cosh \left( \frac{b}{a}\ln (as)\right) \right) , \\
z(s) &=&\frac{a^{3}s}{b\left( b^{2}-a^{2}\right) }\left( b\cosh \left( \frac{%
b}{a}\ln (as)\right) -a\sinh \left( \frac{b}{a}\ln (as)\right) \right) ; \\
a,b &\in &\mathbb{R}-\left\{ 0\right\} .
\end{eqnarray*}%
For this curve, the equiform vector fields are obtained as follows
\begin{eqnarray*}
\mathbf{T} &=&\left( \frac{s}{a},\frac{as}{b}\cosh \left( \frac{b}{a}\ln
(as)\right) ,\frac{as}{b}\sinh \left( \frac{b}{a}\ln (as)\right) \right) , \\
\mathbf{N} &=&\left( 0,\frac{s}{a}\sinh \left( \frac{b}{a}\ln (as)\right) ,%
\frac{s}{a}\cosh \left( \frac{b}{a}\ln (as)\right) \right) , \\
\mathbf{B} &=&\left( 0,\frac{s}{a}\cosh \left( \frac{b}{a}\ln (as)\right) ,%
\frac{s}{a}\sinh \left( \frac{b}{a}\ln (as)\right) \right) ,
\end{eqnarray*}%
respectively.

It follows that%
\begin{equation*}
\mathcal{K}=\frac{1}{a},\mathcal{T}=\frac{-b}{a^{2}}.
\end{equation*}
\end{example}

\begin{center}
\begin{figure}[h]
\centering
\includegraphics[width=6cm]{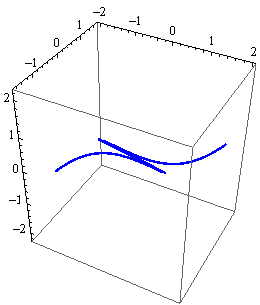} \label{fig:circulart}
\caption{Equiform timelike circular helix with $\mathcal{K}=\frac{1}{a},%
\mathcal{T}=\frac{-b}{a^{2}}$.}
\end{figure}
\end{center}

\begin{example}
Let the equiform \textbf{spacelike }circular helix $\mathbf{r}%
:I\longrightarrow G_{3}^{1},I\subseteq \mathbb{R}$ in the form
\begin{equation*}
\mathbf{r}(x)=(x,y(x),z(x)),
\end{equation*}%
where
\begin{eqnarray*}
x(s) &=&s, \\
y(s) &=&\frac{a^{3}s}{b\left( b^{2}-a^{2}\right) }\left( b\cosh \left( \frac{%
b}{a}\ln (as)\right) -a\sinh \left( \frac{b}{a}\ln (as)\right) \right) , \\
z(s) &=&\frac{a^{3}s}{b\left( b^{2}-a^{2}\right) }\left( b\sinh \left( \frac{%
b}{a}\ln (as)\right) -a\cosh \left( \frac{b}{a}\ln (as)\right) \right) ; \\
a,b &\in &\mathbb{R}-\left\{ 0\right\} .~
\end{eqnarray*}%
Here, the equiform differntial vectors are respectively, as follows
\begin{eqnarray*}
\mathbf{T} &=&\left( \frac{s}{a},\frac{as}{b}\sinh \left( \frac{b}{a}\ln
(as)\right) ,\frac{as}{b}\cosh \left( \frac{b}{a}\ln (as)\right) \right) , \\
\mathbf{N} &=&\left( 0,\frac{s}{a}\cosh \left( \frac{b}{a}\ln (as)\right) ,%
\frac{s}{a}\sinh \left( \frac{b}{a}\ln (as)\right) \right) , \\
\mathbf{B} &=&\left( 0,-\frac{s}{a}\sinh \left( \frac{b}{a}\ln (as)\right) ,-%
\frac{s}{a}\cosh \left( \frac{b}{a}\ln (as)\right) \right) .
\end{eqnarray*}%
Equiform curvature and equiform torsion are calculated as follows%
\begin{equation*}
\mathcal{K}=\frac{1}{a},\mathcal{T}=\frac{b}{a^{2}}.
\end{equation*}
\end{example}

\begin{center}
\begin{figure}[h]
\centering
\includegraphics[width=5cm]{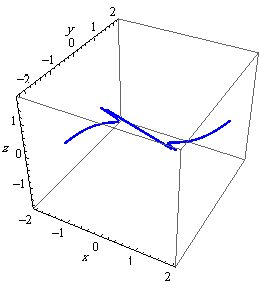} \label{fig:circulars}
\caption{Equiform spacelike circular helix with $\mathcal{K}=\frac{1}{a},%
\mathcal{T}=\frac{b}{a^{2}}$.}
\end{figure}
\end{center}
\begin{example}
If we consider the equiform \textbf{timelike} isotropic logarithmic spiral $%
\mathbf{r}:I\longrightarrow G_{3}^{1},I\subseteq \mathbb{R}$ parameterized by the arc
length $s$ with differential form $ds=dx,$ given by%
\begin{equation*}
\mathbf{r}(x)=(x,y(x),0),
\end{equation*}%
where
\begin{eqnarray*}
x(s) &=&s, \\
y(s) &=&\frac{as+b}{a^{2}}\left( \ln (as+b)-1\right) , \\
z(s) &=&0; \\
a,b &\in &\mathbb{R}-\left\{ 0\right\} .
\end{eqnarray*}%
For this curve, we get%
\begin{eqnarray*}
\mathbf{r}^{\prime } &=&\left( 1,\frac{\ln (as+b)}{a},0\right) , \\
\mathbf{r}^{\prime \prime } &=&\left( 0,\frac{1}{as+b},0\right) , \\
\mathbf{r}^{\prime \prime \prime } &=&\left( 0,\frac{-a}{\left( as+b\right)
^{2}},0\right) ,
\end{eqnarray*}%
and
\begin{eqnarray*}
\mathbf{e}_{1} &=&\left( 1,\frac{\ln (as+b)}{a},0\right) , \\
\mathbf{e}_{2} &=&\left( 0,1,0\right) , \\
\mathbf{e}_{3} &=&\left( 0,0,1\right) ;~\kappa =\frac{1}{as+b},~\tau =0.
\end{eqnarray*}%
In this case, equiform Frenet vectors and equiform curvatures are as follows
\begin{eqnarray*}
\mathbf{T} &=&\left( as+b,\frac{\left( as+b\right) \ln (as+b)}{a},0\right) ,
\\
\mathbf{N} &=&\left( 0,as+b,0\right) , \\
\mathbf{B} &=&\left( 0,0,as+b\right) ,~\mathcal{K}=a,\mathcal{T}=0.
\end{eqnarray*}%
respectively.
\end{example}
\begin{center}
\begin{figure}[h]
\centering
\includegraphics[width=5cm]{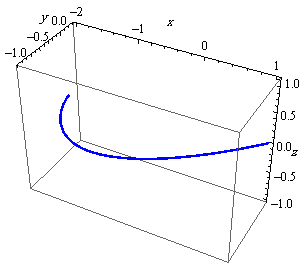} \label{fig:spiral11}
\caption{Equiform timelike isotropic logarithmic spiral with $\mathcal{K}%
(s)=1,\mathcal{T}(s)=0$.}
\end{figure}
\end{center}

From aforementioned calculations, according to (\textbf{Proposition }$%
\mathbf{4.2}$\textbf{\ and Theorems }$\mathbf{4.1-4.3}$), examples $1-4$ are
not characterize curves of equiform AW$(k)$, weak equiform AW$(2)$ and weak
equiform AW$(3)$-types. On the other hand, the last example shows that the
curve is of equiform AW$(2)$ and AW$(3)$-types and it is not of equiform AW$%
(1)$-type. Also, it is of weak equiform AW$(2)$ and not of weak equiform AW$%
(3)$-types.

\section{Conclusion}

In this paper, we have considered some special curves of equiform AW$(k)$-type of the pseudo-Galilean $3$-space. Also, using the equiform curvature
conditions of these curves, the necessary and sufficient conditions for them
to be equiform AW$(k)$ and weak equiform AW$(k)$-types are given.
Furthermore, several examples to confirm our main results have been given
and illustrated.

\end{document}